\newtheorem{theorem}{Theorem}[section]
\newtheorem{axiom}[theorem]{Axiom}
\newtheorem{conjecture}[theorem]{Conjecture}
\newtheorem{corollary}[theorem]{Corollary}
\newtheorem{definition}[theorem]{Definition}
\newtheorem{example}[theorem]{Example}
\newtheorem{exercise}[theorem]{Exercise}
\newtheorem{lemma}[theorem]{Lemma}
\newtheorem{proposition}[theorem]{Proposition}
\newtheorem{remark}[theorem]{Remark}
\let\pdfoutput=\undefined\fi
\chardef\@x10\chardef\@xv60
\def\tcitime{
\def\@time{%
  \@minute\time\@hour\@minute\divide\@hour\@xv
  \ifnum\@hour<\@x 0\fi\the\@hour:%
  \multiply\@hour\@xv\advance\@minute-\@hour
  \ifnum\@minute<\@x 0\fi\the\@minute
  }}%
\def\x@hyperref#1#2#3{%
   % Turn off various catcodes before reading parameter 4
   \catcode`\~ = 12
   \catcode`\$ = 12
   \catcode`\_ = 12
   \catcode`\# = 12
   \catcode`\& = 12
   \catcode`\% = 12
   \y@hyperref{#1}{#2}{#3}%
}
\def\y@hyperref#1#2#3#4{%
   #2\ref{#4}#3
   \catcode`\~ = 13
   \catcode`\$ = 3
   \catcode`\_ = 8
   \catcode`\# = 6
   \catcode`\& = 4
   \catcode`\% = 14
}
\def\QCTOpt[#1]#2{%
  \def\QCTOptB{#1}
  \def\QCTOptA{#2}
}
\def\QCTNOpt#1{%
  \def\QCTOptA{#1}
  \let\QCTOptB\empty
}
\def\Qct{%
  \@ifnextchar[{%
    \QCTOpt}{\QCTNOpt}
}
\def\QCBOpt[#1]#2{%
  \def\QCBOptB{#1}%
  \def\QCBOptA{#2}%
}
\def\QCBNOpt#1{%
  \def\QCBOptA{#1}%
  \let\QCBOptB\empty
}
\def\Qcb{%
  \@ifnextchar[{%
    \QCBOpt}{\QCBNOpt}%
}
\def\PrepCapArgs{%
  \ifx\QCBOptA\empty
    \ifx\QCTOptA\empty
      {}%
    \else
      \ifx\QCTOptB\empty
        {\QCTOptA}%
      \else
        [\QCTOptB]{\QCTOptA}%
      \fi
    \fi
  \else
    \ifx\QCBOptA\empty
      {}%
    \else
      \ifx\QCBOptB\empty
        {\QCBOptA}%
      \else
        [\QCBOptB]{\QCBOptA}%
      \fi
    \fi
  \fi
}
\def\GRAPHICSPS#1{%
 \ifcase\GRAPHICSTYPE%\GRAPHICSTYPE=0
   \special{ps: #1}%
 \or%\GRAPHICSTYPE=1
   \special{language "PS", include "#1"}%
%%%\or%\GRAPHICSTYPE=2
%%%  #1%
 \fi
}%
\def\graffile#1#2#3#4{%
%%% \ifnum\GRAPHICSTYPE=\tw@
%%%  %Following if using psfig
%%%  \@ifundefined{psfig}{\input psfig.tex}{}%
%%%  \psfig{file=#1, height=#3, width=#2}%
%%% \else
  %Following for all others
  % JCS - added BOXTHEFRAME, see below
    \bgroup
	   \@inlabelfalse
       \leavevmode
       \@ifundefined{bbl@deactivate}{\def~{\string~}}{\activesoff}%
        \raise -#4 \BOXTHEFRAME{%
           \hbox to #2{\raise #3\hbox to #2{\null #1\hfil}}}%
    \egroup
}%
\def\draftbox#1#2#3#4{%
 \leavevmode\raise -#4 \hbox{%
  \frame{\rlap{\protect\tiny #1}\hbox to #2%
   {\vrule height#3 width\z@ depth\z@\hfil}%
  }%
 }%
}%
\let\nographics=\@msidraft
\newif\ifwasdraft
\def\GRAPHIC#1#2#3#4#5{%
   \ifnum\@msidraft=\@ne\draftbox{#2}{#3}{#4}{#5}%
   \else\graffile{#1}{#3}{#4}{#5}%
   \fi
}
\def\addtoLaTeXparams#1{%
    \edef\LaTeXparams{\LaTeXparams #1}}%
\newif\ifBoxFrame \BoxFramefalse
\newif\ifOverFrame \OverFramefalse
\newif\ifUnderFrame \UnderFramefalse
\def\BOXTHEFRAME#1{%
   \hbox{%
      \ifBoxFrame
         \frame{#1}%
      \else
         {#1}%
      \fi
   }%
}
\def\doFRAMEparams#1{\BoxFramefalse\OverFramefalse\UnderFramefalse\readFRAMEparams#1\end}%
\def\readFRAMEparams#1{%
 \ifx#1\end%
  \let\next=\relax
  \else
  \ifx#1i\dispkind=\z@\fi
  \ifx#1d\dispkind=\@ne\fi
  \ifx#1f\dispkind=\tw@\fi
  \ifx#1t\addtoLaTeXparams{t}\fi
  \ifx#1b\addtoLaTeXparams{b}\fi
  \ifx#1p\addtoLaTeXparams{p}\fi
  \ifx#1h\addtoLaTeXparams{h}\fi
  \ifx#1X\BoxFrametrue\fi
  \ifx#1O\OverFrametrue\fi
  \ifx#1U\UnderFrametrue\fi
  \ifx#1w
    \ifnum\@msidraft=1\wasdrafttrue\else\wasdraftfalse\fi
    \@msidraft=\@ne
  \fi
  \let\next=\readFRAMEparams
  \fi
 \next
 }%
\def\IFRAME#1#2#3#4#5#6{%
      \bgroup
      \let\QCTOptA\empty
      \let\QCTOptB\empty
      \let\QCBOptA\empty
      \let\QCBOptB\empty
      #6%
      \parindent=0pt
      \leftskip=0pt
      \rightskip=0pt
      \setbox0=\hbox{\QCBOptA}%
      \@tempdima=#1\relax
      \ifOverFrame
          % Do this later
          \typeout{This is not implemented yet}%
          \show\HELP
      \else
         \ifdim\wd0>\@tempdima
            \advance\@tempdima by \@tempdima
            \ifdim\wd0 >\@tempdima
               \setbox1 =\vbox{%
                  \unskip\hbox to \@tempdima{\hfill\GRAPHIC{#5}{#4}{#1}{#2}{#3}\hfill}%
                  \unskip\hbox to \@tempdima{\parbox[b]{\@tempdima}{\QCBOptA}}%
               }%
               \wd1=\@tempdima
            \else
               \textwidth=\wd0
               \setbox1 =\vbox{%
                 \noindent\hbox to \wd0{\hfill\GRAPHIC{#5}{#4}{#1}{#2}{#3}\hfill}\\%
                 \noindent\hbox{\QCBOptA}%
               }%
               \wd1=\wd0
            \fi
         \else
            \ifdim\wd0>0pt
              \hsize=\@tempdima
              \setbox1=\vbox{%
                \unskip\GRAPHIC{#5}{#4}{#1}{#2}{0pt}%
                \break
                \unskip\hbox to \@tempdima{\hfill \QCBOptA\hfill}%
              }%
              \wd1=\@tempdima
           \else
              \hsize=\@tempdima
              \setbox1=\vbox{%
                \unskip\GRAPHIC{#5}{#4}{#1}{#2}{0pt}%
              }%
              \wd1=\@tempdima
           \fi
         \fi
         \@tempdimb=\ht1
         %\advance\@tempdimb by \dp1
         \advance\@tempdimb by -#2
         \advance\@tempdimb by #3
         \leavevmode
         \raise -\@tempdimb \hbox{\box1}%
      \fi
      \egroup%
}%
\def\DFRAME#1#2#3#4#5{%
  \vspace\topsep
  \hfil\break
  \bgroup
     \leftskip\@flushglue
	 \rightskip\@flushglue
	 \parindent\z@
	 \parfillskip\z@skip
     \let\QCTOptA\empty
     \let\QCTOptB\empty
     \let\QCBOptA\empty
     \let\QCBOptB\empty
	 \vbox\bgroup
        \ifOverFrame 
           #5\QCTOptA\par
        \fi
        \GRAPHIC{#4}{#3}{#1}{#2}{\z@}%
        \ifUnderFrame 
           \break#5\QCBOptA
        \fi
	 \egroup
  \egroup
  \vspace\topsep
  \break
}%
\def\FFRAME#1#2#3#4#5#6#7{%
 %If float.sty loaded and float option is 'h', change to 'H'  (gp) 1998/09/05
  \@ifundefined{floatstyle}
    {%floatstyle undefined (and float.sty not present), no change
     \begin{figure}[#1]%
    }
    {%floatstyle DEFINED
	 \ifx#1h%Only the h parameter, change to H
      \begin{figure}[H]%
	 \else
      \begin{figure}[#1]%
	 \fi
	}
  \let\QCTOptA\empty
  \let\QCTOptB\empty
  \let\QCBOptA\empty
  \let\QCBOptB\empty
  \ifOverFrame
    #4
    \ifx\QCTOptA\empty
    \else
      \ifx\QCTOptB\empty
        \caption{\QCTOptA}%
      \else
        \caption[\QCTOptB]{\QCTOptA}%
      \fi
    \fi
    \ifUnderFrame\else
      \label{#5}%
    \fi
  \else
    \UnderFrametrue%
  \fi
  \begin{center}\GRAPHIC{#7}{#6}{#2}{#3}{\z@}\end{center}%
  \ifUnderFrame
    #4
    \ifx\QCBOptA\empty
      \caption{}%
    \else
      \ifx\QCBOptB\empty
        \caption{\QCBOptA}%
      \else
        \caption[\QCBOptB]{\QCBOptA}%
      \fi
    \fi
    \label{#5}%
  \fi
  \end{figure}%
 }%
\def\makeactives{
  \catcode`\"=\active
  \catcode`\;=\active
  \catcode`\:=\active
  \catcode`\'=\active
  \catcode`\~=\active
}
   \gdef\activesoff{%
      \def"{\string"}%
      \def;{\string;}%
      \def:{\string:}%
      \def'{\string'}%
      \def~{\string~}%
      %\bbl@deactivate{"}%
      %\bbl@deactivate{;}%
      %\bbl@deactivate{:}%
      %\bbl@deactivate{'}%
    }
\def\FRAME#1#2#3#4#5#6#7#8{%
 \bgroup
 \ifnum\@msidraft=\@ne
   \wasdrafttrue
 \else
   \wasdraftfalse%
 \fi
 \def\LaTeXparams{}%
 \dispkind=\z@
 \def\LaTeXparams{}%
 \doFRAMEparams{#1}%
 \ifnum\dispkind=\z@\IFRAME{#2}{#3}{#4}{#7}{#8}{#5}\else
  \ifnum\dispkind=\@ne\DFRAME{#2}{#3}{#7}{#8}{#5}\else
   \ifnum\dispkind=\tw@
    \edef\@tempa{\noexpand\FFRAME{\LaTeXparams}}%
    \@tempa{#2}{#3}{#5}{#6}{#7}{#8}%
    \fi
   \fi
  \fi
  \ifwasdraft\@msidraft=1\else\@msidraft=0\fi{}%
  \egroup
 }%
\def\TEXUX#1{"texux"}
\long\def\QQQ#1#2{%
     \long\expandafter\def\csname#1\endcsname{#2}}%
\long\def\QQA#1#2{}%
\def\QTR#1#2{{\csname#1\endcsname {#2}}}%
\def\EXPAND#1[#2]#3{}%
\def\NOEXPAND#1[#2]#3{}%
\def\LaTeXparent#1{}%
\def\ChildStyles#1{}%
\def\ChildDefaults#1{}%
\def\QTagDef#1#2#3{}%
  \providecommand{\UNICODE}[2][]{\protect\rule{.1in}{.1in}}
  \providecommand{\U}[1]{\protect\rule{.1in}{.1in}}
\def\QQfnmark#1{\footnotemark}
 \def\abstract{%
  \if@twocolumn
   \section*{Abstract (Not appropriate in this style!)}%
   \else \small 
   \begin{center}{\bf Abstract\vspace{-.5em}\vspace{\z@}}\end{center}%
   \quotation 
   \fi
  }%
   \def\registered{\relax\ifmmode{}\r@gistered
                    \else$\m@th\r@gistered$\fi}%
 \def\r@gistered{^{\ooalign
  {\hfil\raise.07ex\hbox{$\scriptstyle\rm\text{R}$}\hfil\crcr
  \mathhexbox20D}}}}{}%
\newdimen\theight
\def\newfmtname{LaTeX2e}
  \DeclareOldFontCommand{\rm}{\normalfont\rmfamily}{\mathrm}
  \DeclareOldFontCommand{\sf}{\normalfont\sffamily}{\mathsf}
  \DeclareOldFontCommand{\tt}{\normalfont\ttfamily}{\mathtt}
  \DeclareOldFontCommand{\bf}{\normalfont\bfseries}{\mathbf}
  \DeclareOldFontCommand{\it}{\normalfont\itshape}{\mathit}
  \DeclareOldFontCommand{\sl}{\normalfont\slshape}{\@nomath\sl}
  \DeclareOldFontCommand{\sc}{\normalfont\scshape}{\@nomath\sc}
\def\alpha{{\Greekmath 010B}}%
\def\beta{{\Greekmath 010C}}%
\def\gamma{{\Greekmath 010D}}%
\def\delta{{\Greekmath 010E}}%
\def\epsilon{{\Greekmath 010F}}%
\def\zeta{{\Greekmath 0110}}%
\def\eta{{\Greekmath 0111}}%
\def\theta{{\Greekmath 0112}}%
\def\iota{{\Greekmath 0113}}%
\def\kappa{{\Greekmath 0114}}%
\def\lambda{{\Greekmath 0115}}%
\def\mu{{\Greekmath 0116}}%
\def\nu{{\Greekmath 0117}}%
\def\xi{{\Greekmath 0118}}%
\def\pi{{\Greekmath 0119}}%
\def\rho{{\Greekmath 011A}}%
\def\sigma{{\Greekmath 011B}}%
\def\tau{{\Greekmath 011C}}%
\def\upsilon{{\Greekmath 011D}}%
\def\phi{{\Greekmath 011E}}%
\def\chi{{\Greekmath 011F}}%
\def\psi{{\Greekmath 0120}}%
\def\omega{{\Greekmath 0121}}%
\def\varepsilon{{\Greekmath 0122}}%
\def\vartheta{{\Greekmath 0123}}%
\def\varpi{{\Greekmath 0124}}%
\def\varrho{{\Greekmath 0125}}%
\def\varsigma{{\Greekmath 0126}}%
\def\varphi{{\Greekmath 0127}}%
\def\nabla{{\Greekmath 0272}}
\def\FindBoldGroup{%
   {\setbox0=\hbox{$\mathbf{x\global\edef\theboldgroup{\the\mathgroup}}$}}%
}
\def\Greekmath#1#2#3#4{%
    \if@compatibility
        \ifnum\mathgroup=\symbold
           \mathchoice{\mbox{\boldmath$\displaystyle\mathchar"#1#2#3#4$}}%
                      {\mbox{\boldmath$\textstyle\mathchar"#1#2#3#4$}}%
                      {\mbox{\boldmath$\scriptstyle\mathchar"#1#2#3#4$}}%
                      {\mbox{\boldmath$\scriptscriptstyle\mathchar"#1#2#3#4$}}%
        \else
           \mathchar"#1#2#3#4% 
        \fi 
    \else 
        \FindBoldGroup
        \ifnum\mathgroup=\theboldgroup % For 2e
           \mathchoice{\mbox{\boldmath$\displaystyle\mathchar"#1#2#3#4$}}%
                      {\mbox{\boldmath$\textstyle\mathchar"#1#2#3#4$}}%
                      {\mbox{\boldmath$\scriptstyle\mathchar"#1#2#3#4$}}%
                      {\mbox{\boldmath$\scriptscriptstyle\mathchar"#1#2#3#4$}}%
        \else
           \mathchar"#1#2#3#4% 
        \fi     	    
	  \fi}
\newif\ifGreekBold  \GreekBoldfalse
\let\SAVEPBF=\pbf
\def\pbf{\GreekBoldtrue\SAVEPBF}%
  \newcounter{equationnumber}  
  \def\mathletters{%
     \addtocounter{equation}{1}
     \edef\@currentlabel{\theequation}%
     \setcounter{equationnumber}{\c@equation}
     \setcounter{equation}{0}%
     \edef\theequation{\@currentlabel\noexpand\alph{equation}}%
  }
    \def\BibTeX{{\rm B\kern-.05em{\sc i\kern-.025em b}\kern-.08em
                 T\kern-.1667em\lower.7ex\hbox{E}\kern-.125emX}}}{}%
\def\AmS{{\protect\usefont{OMS}{cmsy}{m}{n}%
                A\kern-.1667em\lower.5ex\hbox{M}\kern-.125emS}}}{}%
\def\@@eqncr{\let\@tempa\relax
    \ifcase\@eqcnt \def\@tempa{& & &}\or \def\@tempa{& &}%
      \else \def\@tempa{&}\fi
     \@tempa
     \if@eqnsw
        \iftag@
           \@taggnum
        \else
           \@eqnnum\stepcounter{equation}%
        \fi
     \fi
     \global\tag@false
     \global\@eqnswtrue
     \global\@eqcnt\z@\cr}
\def\TCItag{\@ifnextchar*{\@TCItagstar}{\@TCItag}}
\def\@TCItag#1{%
    \global\tag@true
    \global\def\@taggnum{(#1)}%
    \global\def\@currentlabel{#1}}
\def\@TCItagstar*#1{%
    \global\tag@true
    \global\def\@taggnum{#1}%
    \global\def\@currentlabel{#1}}
\def\tint{\msi@int\textstyle\int}%
\def\tiint{\msi@int\textstyle\iint}%
\def\tiiint{\msi@int\textstyle\iiint}%
\def\tiiiint{\msi@int\textstyle\iiiint}%
\def\tidotsint{\msi@int\textstyle\idotsint}%
\def\toint{\msi@int\textstyle\oint}%
\newtoks\temptoksa
\newtoks\temptoksb
\newtoks\temptoksc
\def\msi@int#1#2{%
 \def\@temp{{#1#2\the\temptoksc_{\the\temptoksa}^{\the\temptoksb}}}%   
 \futurelet\@nextcs
 \@int
}
\def\@int{%
   \ifx\@nextcs\limits
      \typeout{Found limits}%
      \temptoksc={\limits}%
	  \let\@next\@intgobble%
   \else\ifx\@nextcs\nolimits
      \typeout{Found nolimits}%
      \temptoksc={\nolimits}%
	  \let\@next\@intgobble%
   \else
      \typeout{Did not find limits or no limits}%
      \temptoksc={}%
      \let\@next\msi@limits%
   \fi\fi
   \@next   
}%
\def\@intgobble#1{%
   \typeout{arg is #1}%
   \msi@limits
}
\def\msi@limits{%
   \temptoksa={}%
   \temptoksb={}%
   \@ifnextchar_{\@limitsa}{\@limitsb}%
}
\def\@limitsa_#1{%
   \temptoksa={#1}%
   \@ifnextchar^{\@limitsc}{\@temp}%
}
\def\@limitsb{%
   \@ifnextchar^{\@limitsc}{\@temp}%
}
\def\@limitsc^#1{%
   \temptoksb={#1}%
   \@ifnextchar_{\@limitsd}{\@temp}%   
}
\def\@limitsd_#1{%
   \temptoksa={#1}%
   \@temp
}
\def\dint{\msi@int\displaystyle\int}%
\def\diint{\msi@int\displaystyle\iint}%
\def\diiint{\msi@int\displaystyle\iiint}%
\def\diiiint{\msi@int\displaystyle\iiiint}%
\def\didotsint{\msi@int\displaystyle\idotsint}%
\def\doint{\msi@int\displaystyle\oint}%
\def\dprod{\mathop{\displaystyle \prod }}%
\def\dbigoplus{\mathop{\displaystyle \bigoplus }}%
\def\dcoprod{\mathop{\displaystyle \coprod }}%
\def\dbigcup{\mathop{\displaystyle \bigcup }}%
\def\ExitTCILatex{\makeatother }
\if@compatibility\message{amsmath already loaded}\fi\aftergroup\ExitTCILatex}
\if@compatibility\message{amstex already loaded}\fi\aftergroup\ExitTCILatex}
\if@compatibility\message{amsgen already loaded}\fi\aftergroup\ExitTCILatex}
\let\DOTSI\relax
\def\RIfM@{\relax\ifmmode}%
\def\FN@{\futurelet\next}%
\def\iint{\DOTSI\intno@\tw@\FN@\ints@}%
\def\iiint{\DOTSI\intno@\thr@@\FN@\ints@}%
\def\iiiint{\DOTSI\intno@4 \FN@\ints@}%
\def\idotsint{\DOTSI\intno@\z@\FN@\ints@}%
\def\ints@{\findlimits@\ints@@}%
\newif\iflimtoken@
\newif\iflimits@
\def\findlimits@{\limtoken@true\ifx\next\limits\limits@true
 \else\ifx\next\nolimits\limits@false\else
 \limtoken@false\ifx\ilimits@\nolimits\limits@false\else
 \ifinner\limits@false\else\limits@true\fi\fi\fi\fi}%
\def\multint@{\int\ifnum\intno@=\z@\intdots@                          %1
 \else\intkern@\fi                                                    %2
 \ifnum\intno@>\tw@\int\intkern@\fi                                   %3
 \ifnum\intno@>\thr@@\int\intkern@\fi                                 %4
 \int}%                                                               %5
\def\multintlimits@{\intop\ifnum\intno@=\z@\intdots@\else\intkern@\fi
 \ifnum\intno@>\tw@\intop\intkern@\fi
 \ifnum\intno@>\thr@@\intop\intkern@\fi\intop}%
\def\intic@{%
    \mathchoice{\hskip.5em}{\hskip.4em}{\hskip.4em}{\hskip.4em}}%
\def\negintic@{\mathchoice
 {\hskip-.5em}{\hskip-.4em}{\hskip-.4em}{\hskip-.4em}}%
\def\ints@@{\iflimtoken@                                              %1
 \def\ints@@@{\iflimits@\negintic@
   \mathop{\intic@\multintlimits@}\limits                             %2
  \else\multint@\nolimits\fi                                          %3
  \eat@}%                                                             %4
 \else                                                                %5
 \def\ints@@@{\iflimits@\negintic@
  \mathop{\intic@\multintlimits@}\limits\else
  \multint@\nolimits\fi}\fi\ints@@@}%
\def\intkern@{\mathchoice{\!\!\!}{\!\!}{\!\!}{\!\!}}%
\def\plaincdots@{\mathinner{\cdotp\cdotp\cdotp}}%
\def\intdots@{\mathchoice{\plaincdots@}%
 {{\cdotp}\mkern1.5mu{\cdotp}\mkern1.5mu{\cdotp}}%
 {{\cdotp}\mkern1mu{\cdotp}\mkern1mu{\cdotp}}%
 {{\cdotp}\mkern1mu{\cdotp}\mkern1mu{\cdotp}}}%
\def\RIfM@{\relax\protect\ifmmode}
\def\text{\RIfM@\expandafter\text@\else\expandafter\mbox\fi}
\let\nfss@text\text
\def\text@#1{\mathchoice
   {\textdef@\displaystyle\f@size{#1}}%
   {\textdef@\textstyle\tf@size{\firstchoice@false #1}}%
   {\textdef@\textstyle\sf@size{\firstchoice@false #1}}%
   {\textdef@\textstyle \ssf@size{\firstchoice@false #1}}%
   \glb@settings}
\def\textdef@#1#2#3{\hbox{{%
                    \everymath{#1}%
                    \let\f@size#2\selectfont
                    #3}}}
\newif\iffirstchoice@
\def\Let@{\relax\iffalse{\fi\let\\=\cr\iffalse}\fi}%
\def\vspace@{\def\vspace##1{\crcr\noalign{\vskip##1\relax}}}%
\def\multilimits@{\bgroup\vspace@\Let@
 \baselineskip\fontdimen10 \scriptfont\tw@
 \advance\baselineskip\fontdimen12 \scriptfont\tw@
 \lineskip\thr@@\fontdimen8 \scriptfont\thr@@
 \lineskiplimit\lineskip
 \vbox\bgroup\ialign\bgroup\hfil$\m@th\scriptstyle{##}$\hfil\crcr}%
\def\Sb{_\multilimits@}%
\def\endSb{\crcr\egroup\egroup\egroup}%
\def\Sp{^\multilimits@}%
\newdimen\ex@
\def\rightarrowfill@#1{$#1\m@th\mathord-\mkern-6mu\cleaders
 \hbox{$#1\mkern-2mu\mathord-\mkern-2mu$}\hfill
 \mkern-6mu\mathord\rightarrow$}%
\def\leftarrowfill@#1{$#1\m@th\mathord\leftarrow\mkern-6mu\cleaders
 \hbox{$#1\mkern-2mu\mathord-\mkern-2mu$}\hfill\mkern-6mu\mathord-$}%
\def\leftrightarrowfill@#1{$#1\m@th\mathord\leftarrow
\mkern-6mu\cleaders
 \hbox{$#1\mkern-2mu\mathord-\mkern-2mu$}\hfill
 \mkern-6mu\mathord\rightarrow$}%
\def\overrightarrow{\mathpalette\overrightarrow@}%
\def\overrightarrow@#1#2{\vbox{\ialign{##\crcr\rightarrowfill@#1\crcr
 \noalign{\kern-\ex@\nointerlineskip}$\m@th\hfil#1#2\hfil$\crcr}}}%
\def\overleftarrow{\mathpalette\overleftarrow@}%
\def\overleftarrow@#1#2{\vbox{\ialign{##\crcr\leftarrowfill@#1\crcr
 \noalign{\kern-\ex@\nointerlineskip}$\m@th\hfil#1#2\hfil$\crcr}}}%
\def\overleftrightarrow{\mathpalette\overleftrightarrow@}%
\def\overleftrightarrow@#1#2{\vbox{\ialign{##\crcr
   \leftrightarrowfill@#1\crcr
 \noalign{\kern-\ex@\nointerlineskip}$\m@th\hfil#1#2\hfil$\crcr}}}%
\def\underrightarrow{\mathpalette\underrightarrow@}%
\def\underrightarrow@#1#2{\vtop{\ialign{##\crcr$\m@th\hfil#1#2\hfil
  $\crcr\noalign{\nointerlineskip}\rightarrowfill@#1\crcr}}}%
\def\underleftarrow{\mathpalette\underleftarrow@}%
\def\underleftarrow@#1#2{\vtop{\ialign{##\crcr$\m@th\hfil#1#2\hfil
  $\crcr\noalign{\nointerlineskip}\leftarrowfill@#1\crcr}}}%
\def\underleftrightarrow{\mathpalette\underleftrightarrow@}%
\def\underleftrightarrow@#1#2{\vtop{\ialign{##\crcr$\m@th
  \hfil#1#2\hfil$\crcr
 \noalign{\nointerlineskip}\leftrightarrowfill@#1\crcr}}}%
\def\qopnamewl@#1{\mathop{\operator@font#1}\nlimits@}
\let\nlimits@\displaylimits
\def\setboxz@h{\setbox\z@\hbox}
\def\varlim@#1#2{\mathop{\vtop{\ialign{##\crcr
 \hfil$#1\m@th\operator@font lim$\hfil\crcr
 \noalign{\nointerlineskip}#2#1\crcr
 \noalign{\nointerlineskip\kern-\ex@}\crcr}}}}
 \def\rightarrowfill@#1{\m@th\setboxz@h{$#1-$}\ht\z@\z@
  $#1\copy\z@\mkern-6mu\cleaders
  \hbox{$#1\mkern-2mu\box\z@\mkern-2mu$}\hfill
  \mkern-6mu\mathord\rightarrow$}
\def\leftarrowfill@#1{\m@th\setboxz@h{$#1-$}\ht\z@\z@
  $#1\mathord\leftarrow\mkern-6mu\cleaders
  \hbox{$#1\mkern-2mu\copy\z@\mkern-2mu$}\hfill
  \mkern-6mu\box\z@$}
\def\projlim{\qopnamewl@{proj\,lim}}
\def\injlim{\qopnamewl@{inj\,lim}}
\def\varinjlim{\mathpalette\varlim@\rightarrowfill@}
\def\varprojlim{\mathpalette\varlim@\leftarrowfill@}
\def\varliminf{\mathpalette\varliminf@{}}
\def\varliminf@#1{\mathop{\underline{\vrule\@depth.2\ex@\@width\z@
   \hbox{$#1\m@th\operator@font lim$}}}}
\def\varlimsup{\mathpalette\varlimsup@{}}
\def\varlimsup@#1{\mathop{\overline
  {\hbox{$#1\m@th\operator@font lim$}}}}
\def\align{\@verbatim \frenchspacing\@vobeyspaces \@alignverbatim
You are using the "align" environment in a style in which it is not defined.}
\let\csname endalign*\endcsname =\endtrivlist
\def\alignat{\@verbatim \frenchspacing\@vobeyspaces \@alignatverbatim
You are using the "alignat" environment in a style in which it is not defined.}
\let\csname endalignat*\endcsname =\endtrivlist
\def\xalignat{\@verbatim \frenchspacing\@vobeyspaces \@xalignatverbatim
You are using the "xalignat" environment in a style in which it is not defined.}
\let\csname endxalignat*\endcsname =\endtrivlist
\def\gather{\@verbatim \frenchspacing\@vobeyspaces \@gatherverbatim
You are using the "gather" environment in a style in which it is not defined.}
\let\csname endgather*\endcsname =\endtrivlist
\def\multiline{\@verbatim \frenchspacing\@vobeyspaces \@multilineverbatim
You are using the "multiline" environment in a style in which it is not defined.}
\let\csname endmultiline*\endcsname =\endtrivlist
\def\arrax{\@verbatim \frenchspacing\@vobeyspaces \@arraxverbatim
You are using a type of "array" construct that is only allowed in AmS-LaTeX.}
\def\tabulax{\@verbatim \frenchspacing\@vobeyspaces \@tabulaxverbatim
You are using a type of "tabular" construct that is only allowed in AmS-LaTeX.}
\let\csname endarrax*\endcsname =\endtrivlist
\let\csname endtabulax*\endcsname =\endtrivlist
 \def\endequation{%
     \ifmmode\ifinner % FLEQN hack
      \iftag@
        \addtocounter{equation}{-1} % undo the increment made in the begin part
        $\hfil
           \displaywidth\linewidth\@taggnum\egroup \endtrivlist
        \global\tag@false
        \global\@ignoretrue   
      \else
        $\hfil
           \displaywidth\linewidth\@eqnnum\egroup \endtrivlist
        \global\tag@false
        \global\@ignoretrue 
      \fi
     \else   
      \iftag@
        \addtocounter{equation}{-1} % undo the increment made in the begin part
        \eqno \hbox{\@taggnum}
        \global\tag@false%
        $$\global\@ignoretrue
      \else
        \eqno \hbox{\@eqnnum}% $$ BRACE MATCHING HACK
        $$\global\@ignoretrue
      \fi
     \fi\fi
 } 
 \newif\iftag@ \tag@false
 \def\TCItag{\@ifnextchar*{\@TCItagstar}{\@TCItag}}
 \def\@TCItag#1{%
     \global\tag@true
     \global\def\@taggnum{(#1)}%
     \global\def\@currentlabel{#1}}
 \def\@TCItagstar*#1{%
     \global\tag@true
     \global\def\@taggnum{#1}%
     \global\def\@currentlabel{#1}}
     \def\tag{\@ifnextchar*{\@tagstar}{\@tag}}
     \def\@tag#1{%
         \global\tag@true
         \global\def\@taggnum{(#1)}}
     \def\@tagstar*#1{%
         \global\tag@true
         \global\def\@taggnum{#1}}
\begin{document}
\title[Precosheaves are smooth]{Precosheaves of pro-sets and abelian
pro-groups are smooth}
\author{Andrei V. Prasolov}
\address{Institute of Mathematics and Statistics\\
University of Troms\o , N-9037 Troms\o , Norway}
\email{andrei.prasolov@uit.no}
\urladdr{http://www.math.uit.no/users/andreip/Welcome.html}
\date{}
\subjclass[2000]{Primary 18F10, 18F20 ; Secondary 55P55, 55Q07}
\keywords{Cosheaves, precosheaves, pro-sets, abelian pro-groups,
pro-homotopy groups, pro-homology groups.}

\begin{abstract}
Let $\mathbb{D}$ be the category of pro-sets (or abelian pro-groups). It is
proved that for any Grothendieck site $X$, there exists a reflector from the
category of precosheaves on $X$ with values in $\mathbb{D}$ to the full
subcategory of cosheaves. In the case of precosheaves on topological spaces,
it is proved that any precosheaf is smooth, i.e. is locally isomorphic to a
cosheaf. Constant cosheaves are constructed, and there are established
connections with shape theory.
\end{abstract}

\maketitle
\tableofcontents

\setcounter{section}{-1}

\section{Introduction}

A \emph{presheaf} (\emph{precosheaf}) on a topological space $X$ with values
in a category $\mathbb{D}$ is just a contravariant (covariant) functor from
the category of open subsets of $X$ to $\mathbb{D}$, while a \emph{sheaf} (%
\emph{cosheaf}) is such a functor satisfying some extra conditions.
Therefore, a (pre)cosheaf with values in $\mathbb{D}$ is nothing but a
(pre)sheaf with values in the opposite category $\mathbb{D}^{op}$.

While the theory of sheaves is well developed, and is covered by a plenty of
publications, the theory of cosheaves is represented much poorer. The main
reason for this is that \emph{cofiltrant limits} are \emph{not} exact in the
\textquotedblleft usual\textquotedblright\ categories like sets or abelian
groups. On the contrary, \emph{filtrant colimits} are exact, which allows to
construct rather rich theories of sheaves (of sets or of abelian groups). To
sum up, the categories $\mathbb{SET}$ of small sets and $\mathbb{AB}$ of
small abelian groups (Definition \ref{SET,AB}) are badly suited for cosheaf
theory. Dually, $\mathbb{SET}^{op}$ and $\mathbb{AB}^{op}$ are badly suited
for sheaf theory.

According to the classical definition (\cite{Bredon-Book-MR1481706},
Definition I.1.2), a sheaf of sets (abelian groups) is a local homeomorphism%
\begin{equation*}
\pi :\mathcal{A}\longrightarrow X
\end{equation*}%
(with a structure of an abelian group on each \emph{stalk} $\mathcal{A}%
_{x}=\pi ^{-1}\left( x\right) $). However, the latter definition is
equivalent to the following: a sheaf $\mathcal{A}$ is a presheaf satisfying
conditions (S1) and (S2) from \cite{Bredon-Book-MR1481706}, I.1.7. Those two
conditions, in turn, can be easily reformulated for an arbitrary category $%
\mathbb{D}$, which gives a \textquotedblleft modern\textquotedblright\
definition of a sheaf (Definition \ref{Sheaf in D}), while the condition
(S1) alone gives the definition of a \emph{separated} presheaf (\emph{%
monopresheaf} in the terminology of \cite{Bredon-Book-MR1481706}), see
Definition$\ $\ref{Separated-presheaf}. We are not aware of any definition
of a cosheaf analogous to the classical definition of a sheaf. The
\textquotedblleft modern\textquotedblright\ definition, however, can be
easily applied to cosheaves. In \cite{Bredon-MR0226631}, Section 1, and \cite%
{Bredon-Book-MR1481706}, Definitions V.1.1 and VI.1.1, there are given
definitions of a cosheaf and a \emph{coseparated} precosheaf (\emph{%
epiprecosheaf} in the terminology of \cite{Bredon-Book-MR1481706}). These
definitions are easily applied to a general category $\mathbb{D}$, see
Definition \ref{Cosheaf in D} for a cosheaf and Definition \ref%
{Coseparated-precosheaf} for a coseparated precosheaf.

\begin{remark}
Notice that the direct sum $\oplus $\ of abelian groups is a coproduct $%
\sqcup $\ in the category $\mathbb{AB}$. That is why the symbol $\oplus $\
in \cite{Bredon-Book-MR1481706} is replaced by $\sqcup $\ in our definition.
\end{remark}

\begin{remark}
While a sheaf of abelian groups is still a sheaf considered as a presheaf of
sets, a similar statement is not true for cosheaves of abelian groups,
pro-groups, sets and pro-sets. The reason is that the forgetting functors%
\begin{eqnarray*}
\mathbb{AB} &\longrightarrow &\mathbb{SET}, \\
Pro\left( \mathbb{AB}\right) &\longrightarrow &Pro\left( \mathbb{SET}\right)
,
\end{eqnarray*}%
do \textbf{not commute} with colimits (e.g. coproducts). See Corollary \ref%
{AB-cosheaf-is-not-SET-cosheaf}.
\end{remark}

The first step in building a suitable theory of cosheaves would be
constructing a \emph{cosheaf associated with a precosheaf}. It is impossible
for \textquotedblleft usual\textquotedblright\ cosheaves, because of the
mentioned drawbacks of categories $\mathbb{SET}$ and $\mathbb{AB}$. In \cite%
{Bredon-Book-MR1481706} and \cite{Bredon-MR0226631}, it is made an attempt
to avoid this difficulty by introducing the so-called \emph{smooth}
precosheaves (Definition \ref{Def-smooth}). It is not clear whether one has
enough smooth precosheaves for building a suitable theory of cosheaves (see
Examples\ \ref{Non-smooth-precosheaf}-\ref{H0-is-cosheaf}). In fact, Glen E.
Bredon back in 1968 was rather pessimistic on the issue. See \cite%
{Bredon-MR0226631}, p. 2: \textquotedblleft \emph{The most basic concept in
sheaf theory is that of a sheaf generated by a given presheaf. In
categorical terminology this is the concept of a reflector from presheaves
to sheaves. We believe that there is not much hope for the existence of a
reflector from precosheaves to cosheaves}\textquotedblright . It seems that
he was still pessimistic in 1997: Chapter VI \textquotedblleft Cosheaves and 
\v{C}ech homology\textquotedblright\ of his book \cite{Bredon-Book-MR1481706}
is almost identical to \cite{Bredon-MR0226631}.

On the contrary, our approach seems to have solved the problem. If one
allows (pre)cosheaves to take values in a larger category, then the desired
reflector can be constructed. How large should that category be? The first
candidates are the categories of functors $\mathbb{SET}^{\mathbb{SET}}$ and $%
\mathbb{AB}^{\mathbb{AB}}$ (actually, duals to these categories since we are
dealing with (pre)cosheaves) (see Section \ref{Appendix-Pro-category}). The
two categories are very nice with respect to limits, colimits, exactness
etc. The major drawback, however, is that they are \emph{huge} which means
that they are not $\mathfrak{U}$-categories (Definition \ref{U-category})
where $\mathfrak{U}$ is the \emph{universe} we are using in this paper. Let $%
\mathbb{D}$ be either $\mathbb{SET}$ or $\mathbb{AB}$. Our task is to find
an appropriate category $\mathbb{E}^{op}$ between $\mathbb{D}^{op}$ (which
is too small) and $\mathbb{D}^{\mathbb{D}}$ (which is too large):%
\begin{equation*}
\mathbb{D}^{op}\subseteq \mathbb{E}^{op}\subseteq \mathbb{D}^{\mathbb{D}}.
\end{equation*}

It follows from our considerations in this paper, that the best candidate
for such $\mathbb{E}$ is the pro-category $Pro\left( \mathbb{D}\right) $
(see Section \ref{Appendix-Pro-category}). Our reflector, which is naturally
called \textquotedblleft \emph{cosheafification}\textquotedblright , could
be built like this: take a precosheaf with values in $\mathbb{E}$, then the
corresponding presheaf with values in $\mathbb{E}^{op}$, \emph{sheafify} as
a presheaf with values in $\mathbb{D}^{\mathbb{D}}$, and try to return back
to the category $\mathbb{E}^{op}$. However, this is not that simple. To
check that the resulting sheaf takes values in $\mathbb{E}^{op}$, is too
complicated (see Remark \ref{Belongs-to-pro-D}). Moreover, such a method
would not cosheafify a precosheaf explicitly. That is why we build our
construction \textquotedblleft from below\textquotedblright , mirroring the
well-known two-step process of sheafification:%
\begin{equation*}
\mathcal{A}\longmapsto \mathcal{A}^{+}\longmapsto \mathcal{A}^{++}=\mathcal{A%
}^{\#}.
\end{equation*}%
We have succeeded because of the niceness of the category $Pro\left( \mathbb{%
D}\right) $. For \textquotedblleft usual\textquotedblright\ precosheaves
(with values in $\mathbb{D}$) this two-step process does not work.

\begin{remark}
An interesting attempt is made in \cite{Schneiders-MR885939} where the
author sketches a sheaf theory on topological spaces with values in an
\textquotedblleft elementary\textquotedblright\ category $\mathbb{E}^{op}$,
which is equivalent to a cosheaf theory with values in the category $\mathbb{%
E}$. He proposes a candidate for such a category: $\mathbb{E}$ is the
category $Pro_{\mathfrak{V}}\left( \mathfrak{U}\text{-}\mathbb{AB}\right) $
of $\mathfrak{V}$-indexed abelian pro-$\mathfrak{U}$-groups where $\mathfrak{%
U}$ and $\mathfrak{V}$ are \textbf{two} universes, and the latter is larger
than the former ($\mathfrak{U\in V}$). This seems to be too much! One
universe should be enough. Moreover, the \textquotedblleft
usual\textquotedblright\ pro-category $Pro\left( \mathbb{AB}\right) $ is too
small to be used in cosheaf theory in \emph{loc. cit.}: the category $\left(
Pro\left( \mathbb{AB}\right) \right) ^{op}$ is \textbf{not} elementary.
\end{remark}

The reflector we have constructed guarantees that our precosheaves are
always smooth (Corollary \ref{Corollary-smooth}). Moreover, in Theorem \ref%
{Our-cosheaves-vs-Bredon}, we give necessary and sufficient conditions for
smoothness of a precosheaf with values in an \textquotedblleft
old\textquotedblright\ category ($\mathbb{SET}$ or $\mathbb{AB}$):\ it is
smooth iff our reflector applied to that precosheaf produces a cosheaf which
takes values in that old category.

Another difficulty in cosheaf theory is the lack of suitable \emph{constant}
cosheaves. In \cite{Bredon-Book-MR1481706} and \cite{Bredon-MR0226631}, such
cosheaves are constructed only for locally connected spaces. See Examples %
\ref{p0-prime-not-cosheaf}-\ref{Converging-sequence}.

In this paper, we begin a systematic study of cosheaves on topological
spaces (as well as on general Grothendieck \emph{sites}, see Definition \ref%
{Grothendieck-site}) with values in $Pro\left( \mathbb{SET}\right) $ and $%
Pro\left( \mathbb{AB}\right) $. In Theorem \ref{Main-Pro-Set}, the cosheaf $%
\mathcal{A}_{\#}$ associated with a precosheaf $\mathcal{A}$ is constructed,
giving a pair of adjoint functors and a reflector from the category of
precosheaves to the category of cosheaves. It appears that on a topological
space such a cosheaf is \emph{locally isomorphic} to the original precosheaf
(Theorem \ref{Main-local-iso-pro-SET}), implying that any precosheaf is
smooth (Corollary \ref{Corollary-smooth}). In Theorem \ref{Main-constant},
constant cosheaves are constructed. It turns out that they are closely
connected to shape theory. Namely, the constant cosheaf $\left( S\right)
_{\#}$ with values in $Pro\left( \mathbb{SET}\right) $ is isomorphic to the
pro-homotopy cosheaf $S\times pro$-$\pi _{0}$ (in particular $\left( \mathbf{%
pt}\right) _{\#}%
%TCIMACRO{\TeXButton{ISO}{\simeq}}%
%BeginExpansion
\simeq%
%EndExpansion
pro$-$\pi _{0}$), while the constant cosheaf $\left( A\right) _{\#}$ with
values in $Pro\left( \mathbb{AB}\right) $ is isomorphic to the pro-homology
cosheaf $pro$-$H_{0}\left( \_,A\right) $.

In future papers, we are planning to develop homology of cosheaves, i.e. to
study projective and flabby cosheaves, projective and flabby resolutions,
and to construct the left satellites 
\begin{equation*}
H_{n}\left( X,\mathcal{A}\right) :=L_{n}\Gamma \left( X,\mathcal{A}\right)
\end{equation*}%
of the global sections functor 
\begin{equation*}
H_{0}\left( X,\mathcal{A}\right) :=\Gamma \left( X,\mathcal{A}\right) .
\end{equation*}%
It is expected that deeper connections to shape theory will be discovered,
as is stated in the two Conjectures below:

\begin{conjecture}
\label{Satellites-H}On the site $NORM\left( X\right) $ (Example \ref%
{Site-NORM}), the left satellites of $H_{0}$ are naturally isomorphic to the
pro-homology:%
\begin{equation*}
H_{n}\left( X,A_{\#}\right) =H_{n}\left( X,pro\text{-}H_{0}\left(
\_,A\right) \right) 
%TCIMACRO{\TeXButton{ISO}{\simeq}}%
%BeginExpansion
\simeq%
%EndExpansion
pro\text{-}H_{n}\left( X,A\right) .
\end{equation*}%
If $X$ is paracompact Hausdorff, the above isomorphisms exist also for the
standard site $OPEN\left( X\right) $ (Example \ref{Site-TOP}).
\end{conjecture}

\begin{conjecture}
\label{Satellites-Pi}On the site $NORM\left( X\right) $, the \textbf{%
non-abelian} left satellites of $H_{0}$ are naturally isomorphic to the
pro-homotopy:%
\begin{eqnarray*}
H_{n}\left( X,S_{\#}\right) &=&H_{n}\left( X,S\times pro\text{-}\pi
_{0}\right) 
%TCIMACRO{\TeXButton{ISO}{\simeq}}%
%BeginExpansion
\simeq%
%EndExpansion
S\times pro\text{-}\pi _{n}\left( X\right) , \\
H_{n}\left( X,\left( \mathbf{pt}\right) _{\#}\right) &=&H_{n}\left( X,pro%
\text{-}\pi _{0}\right) 
%TCIMACRO{\TeXButton{ISO}{\simeq}}%
%BeginExpansion
\simeq%
%EndExpansion
pro\text{-}\pi _{n}\left( X\right) .
\end{eqnarray*}%
If $X$ is paracompact Hausdorff, the above isomorphisms exist also for the
standard site $OPEN\left( X\right) $.
\end{conjecture}

The main application (Theorem \ref{Main-constant}) deals with the case of
topological spaces (i.e. the site $OPEN\left( X\right) $). Our
constructions, however, are valid for general Grothendieck sites. The
constructions in (strong) shape theory use essentially \emph{normal}
coverings instead of general coverings, therefore dealing with the site $%
NORM\left( X\right) $ instead of the site $OPEN\left( X\right) $. It seems
that Theorem \ref{Main-constant} is valid also for the site $NORM\left(
X\right) $. Applying our machinery (from this paper and from future papers)
to the site $FINITE\left( X\right) $ (Example \ref{Site-FINITE}), we expect
to obtain results on homology of the Stone-%
%TCIMACRO{\TeXButton{Cech }{\u{C}ech \ }}%
%BeginExpansion
\u{C}ech \ %
%EndExpansion
compactification $\beta \left( X\right) $. To deal with the equivariant
homology, one should apply the machinery to the equivariant site $%
OPEN_{G}\left( X\right) $ (Example \ref{Equivariant}).

It is not yet clear how to generalize the above Conjectures to \emph{strong
shape theory}. However, we have some ideas how to do that.

Other possible applications could be in \'{e}tale homotopy theory \cite%
{Atrin-Mazur-MR883959} as is summarized in the following

\begin{conjecture}
Let $X^{et}$ be the site from Example \ref{Site-ETALE}.

\begin{enumerate}
\item The left satellites of $H_{0}$ are naturally isomorphic to the \'{e}%
tale pro-homology:%
\begin{equation*}
H_{n}\left( X^{et},A_{\#}\right) 
%TCIMACRO{\TeXButton{ISO}{\simeq}}%
%BeginExpansion
\simeq%
%EndExpansion
H_{n}^{et}\left( X,A\right) .
\end{equation*}

\item The non-abelian left satellites of $H_{0}$ are naturally isomorphic to
the \'{e}tale pro-homotopy:%
\begin{equation*}
H_{n}\left( X^{et},\left( \mathbf{pt}\right) _{\#}\right) 
%TCIMACRO{\TeXButton{ISO}{\simeq}}%
%BeginExpansion
\simeq%
%EndExpansion
\pi _{n}^{et}\left( X\right) .
\end{equation*}
\end{enumerate}
\end{conjecture}

\section{Main results}

Let $X$ be a site (Definition \ref{Grothendieck-site}), let $\mathbb{D}$ be
either $\mathbb{SET}$ or $\mathbb{AB}$, and let $\mathbb{PCS}\left(
X,Pro\left( \mathbb{D}\right) \right) $ and $\mathbb{CS}\left( X,Pro\left( 
\mathbb{D}\right) \right) $ be the categories of precosheaves and cosheaves,
respectively, on $X$, with values in $Pro\left( \mathbb{D}\right) $
(Definition \ref{Categories-of-(pre)cosheaves}). See Appendix (Section \ref%
{Appendix-Pro-category}) for the definition and properties of the categories 
$Pro\left( \mathbb{SET}\right) $ and $Pro\left( \mathbb{AB}\right) $.

\begin{theorem}
\label{Main-Pro-Set}Let $\mathbb{D}$ be either $\mathbb{SET}$ or $\mathbb{AB}
$.

\begin{enumerate}
\item The inclusion functor%
\begin{equation*}
I:\mathbb{CS}\left( X,Pro\left( \mathbb{D}\right) \right) \longrightarrow 
\mathbb{P}\mathbb{CS}\left( X,Pro\left( \mathbb{D}\right) \right)
\end{equation*}%
has a right adjoint%
\begin{equation*}
\left( {}\right) _{\#}:\mathbb{P}\mathbb{CS}\left( X,Pro\left( \mathbb{D}%
\right) \right) \longrightarrow \mathbb{CS}\left( X,Pro\left( \mathbb{D}%
\right) \right) .
\end{equation*}

\item For any cosheaf $\mathcal{A}$ on $X$ with values in $Pro\left( \mathbb{%
D}\right) $, the canonical morphism $\mathcal{A}_{\#}\rightarrow \mathcal{A}$
is an isomorphism of cosheaves, i.e. $\left( {}\right) _{\#}$ is a reflector
from the category of precosheaves with values in $Pro\left( \mathbb{D}%
\right) $ to the full subcategory of cosheaves with values in the same
category.
\end{enumerate}
\end{theorem}

In Theorems \ref{Main-local-iso-pro-SET} and \ref{Main-constant}, as well as
in Corollary \ref{Corollary-smooth} below, $X$ is a topological space. We
denote by the same letter $X$ the corresponding standard site (see Example %
\ref{Site-TOP} and Remark \ref{Denote-standard-site-simply}).

\begin{theorem}
\label{Main-local-iso-pro-SET}Let $\mathbb{D}$ be either $\mathbb{SET}$ or $%
\mathbb{AB}$.

\begin{enumerate}
\item For any precosheaf $\mathcal{A}$ on $X$ with values in $Pro\left( 
\mathbb{D}\right) $, $\mathcal{A}_{\#}\rightarrow \mathcal{A}$ is a local
isomorphism (Definition \ref{local-isomorphism}).

\item Any local isomorphism $\mathcal{A\rightarrow B}$ between cosheaves on $%
X$ with values in $Pro\left( \mathbb{D}\right) $, is an isomorphism.

\item If $\mathcal{B}\longrightarrow \mathcal{A}$ is a local isomorphism,
and $\mathcal{B}$ is a cosheaf, then the natural morphism $\mathcal{B}%
\rightarrow \mathcal{A}_{\#}$ is an isomorphism.
\end{enumerate}
\end{theorem}

Corollary \ref{Corollary-smooth} below guarantees that all precosheaves with
values in $Pro\left( \mathbb{SET}\right) $ or $Pro\left( \mathbb{AB}\right) $
are smooth:

\begin{definition}
\label{Def-smooth}A precosheaf $\mathcal{A}$ is called \textbf{smooth} (\cite%
{Bredon-Book-MR1481706}, Corollary VI.3.2 and Definition VI.3.4, or \cite%
{Bredon-MR0226631}, Corollary 3.5 and Definition 3.7) iff there exist
precosheaves $\mathcal{B}$ and $\mathcal{B}^{\prime }$, a cosheaf $\mathcal{C%
}$, and local isomorphisms $\mathcal{A\rightarrow B\leftarrow C}$, or,
equivalently, local isomorphisms $\mathcal{A\leftarrow B}^{\prime
}\rightarrow \mathcal{C}$.
\end{definition}

\begin{corollary}
\label{Corollary-smooth}Any precosheaf with values in $Pro\left( \mathbb{SET}%
\right) $ or in $Pro\left( \mathbb{AB}\right) $ is smooth.
\end{corollary}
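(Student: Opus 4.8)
The plan is to read this off directly from Theorems \ref{Main-local-iso-pro-SET} and \ref{Main-local-iso-pro-AB} together with Theorems \ref{Main-Pro-Set} and \ref{Main-Pro-AB}; once those are in hand there is essentially nothing left to do. Fix a precosheaf $\mathcal{A}$ with values in $Pro\left( \mathbb{SET}\right) $ (the $Pro\left( \mathbb{AB}\right) $ case is handled identically, citing Theorems \ref{Main-Pro-AB} and \ref{Main-local-iso-pro-AB} in place of Theorems \ref{Main-Pro-Set} and \ref{Main-local-iso-pro-SET}). Form the associated cosheaf $\mathcal{A}_{\#}$ supplied by Theorem \ref{Main-Pro-Set}; it is a cosheaf on $X$, and by Theorem \ref{Main-local-iso-pro-SET}(1) the canonical morphism $\mathcal{A}_{\#}\longrightarrow \mathcal{A}$ is a local isomorphism.

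Next I would exhibit the data required by Definition \ref{Def-smooth}. Take $\mathcal{B}:=\mathcal{A}$ and $\mathcal{C}:=\mathcal{A}_{\#}$, together with the span
\[
\mathcal{A}\overset{\mathrm{id}}{\longrightarrow }\mathcal{A}\longleftarrow \mathcal{A}_{\#},
\]
in which the right arrow is the canonical local isomorphism just mentioned and the left arrow is the identity. The only point to check is that an identity morphism --- more generally any isomorphism of precosheaves --- is a local isomorphism in the sense of Definition \ref{local-isomorphism}; this is immediate, since the local surjectivity/injectivity conditions in that definition are satisfied trivially by an isomorphism. Hence the first form of the smoothness condition holds and $\mathcal{A}$ is smooth. (If one prefers the second, equivalent form $\mathcal{A}\longleftarrow \mathcal{B}^{\prime }\longrightarrow \mathcal{C}$, take $\mathcal{B}^{\prime }=\mathcal{C}=\mathcal{A}_{\#}$, with the canonical morphism $\mathcal{A}_{\#}\to \mathcal{A}$ on the left and the identity on the right.)

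There is no genuine obstacle here: all the content has been front-loaded into the local-isomorphism theorems, and the corollary is a one-line deduction. The only things worth spelling out in the write-up are the explicit choice of span and the trivial remark that isomorphisms qualify as local isomorphisms; everything else is a citation of Theorems \ref{Main-Pro-Set}--\ref{Main-local-iso-pro-AB}.
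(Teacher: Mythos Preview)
Your proposal is correct and matches the paper's own proof essentially verbatim: the paper simply displays the two spans
\[
\mathcal{A}\overset{Id}{\longrightarrow }\mathcal{A}\longleftarrow \mathcal{A}_{\#}
\qquad\text{and}\qquad
\mathcal{A}\longleftarrow \mathcal{A}_{\#}\overset{Id}{\longrightarrow }\mathcal{A}_{\#},
\]
exactly as you do. One tiny wording quibble: in Definition~\ref{local-isomorphism} a local isomorphism is defined via isomorphisms on costalks $\mathcal{A}^{x}\to\mathcal{B}^{x}$, not via separate ``local surjectivity/injectivity conditions''; the identity induces the identity on each costalk, so your conclusion stands.
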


\begin{proof}
$\mathcal{A}\overset{Id}{\rightarrow }\mathcal{A\leftarrow A}_{\#}$ or $%
\mathcal{A\leftarrow A}_{\#}\overset{Id}{\rightarrow }\mathcal{A}_{\#}$.
\end{proof}

The results on cosheaves and precosheaves with values in $Pro\left( \mathbb{%
SET}\right) $ and $Pro\left( \mathbb{AB}\right) $ can be applied to
\textquotedblleft usual\textquotedblright\ ones (like in \cite%
{Bredon-MR0226631} and \cite{Bredon-Book-MR1481706}, Chapter VI), with
values in $\mathbb{SET}$ and $\mathbb{AB}$. The thing is that $\mathbb{SET}$
is a full subcategory of $Pro\left( \mathbb{SET}\right) $, while $\mathbb{AB}
$ is a full abelian subcategory of $Pro\left( \mathbb{AB}\right) $.

The connection between the two types of (pre)cosheaves can be summarized in
the following

\begin{theorem}
\label{Our-cosheaves-vs-Bredon}Let $\mathbb{D}$ be either $\mathbb{SET}$ or $%
\mathbb{AB}$, and let $\mathcal{A}$ be a precosheaf on a topological space $%
X $ with values in $\mathbb{D}$.

\begin{enumerate}
\item $\mathcal{A}$ is coseparated (a cosheaf) iff it is coseparated (a
cosheaf) when considered as a precosheaf with values in $Pro\left( \mathbb{D}%
\right) $.

\item $\mathcal{A}$ is smooth iff $\mathcal{A}_{\#}$ takes values in $%
\mathbb{D}$, i.e. $\mathcal{A}_{\#}\left( U\right) \in \mathbb{D}$ (in other
words, is a \emph{trivial} pro-object, see Remark \ref{Trivial-pro-object})
for any open subset $U\subseteq X$.
\end{enumerate}
\end{theorem}

\begin{remark}
\label{Local-isomorphism-on-NORM(X)}Our reflector $\left( {}\right) _{\#}$
can be surely applied to precosheaves on the sites $NORM\left( X\right) $, $%
FINITE\left( X\right) $ and $OPEN_{G}\left( X\right) $. It is not clear,
however, whether a statement analogous to Theorem \ref%
{Main-local-iso-pro-SET}, is true. It seems that it may be true for the site 
$NORM\left( X\right) $ if one modifies the definition of a costalk, taking 
\textbf{normal} instead of arbitrary neighborhoods.
\end{remark}

We are now able to construct \emph{constant} cosheaves, and to establish
connections to shape theory.

\begin{theorem}
\label{Main-constant}Let $S$ be a set, and let $A$ be an abelian group.

\begin{enumerate}
\item The precosheaf%
\begin{equation*}
\mathcal{P}\left( U\right) :=S\times pro\text{-}\pi _{0}\left( U\right)
\end{equation*}%
where $pro$-$\pi _{0}$ is the pro-homotopy functor from Definition \ref%
{Pro-homotopy-groups} (see also \cite{Mardesic-Segal-MR676973}, p. 121), is
a cosheaf.

\item Let $S$ be the constant precosheaf corresponding to $S$ (Definition %
\ref{constant}) on $X$ with values in $Pro\left( \mathbb{SET}\right) $. Then 
$\left( S\right) _{\#}$ is naturally isomorphic to $\mathcal{P}$.

\item The precosheaf%
\begin{equation*}
\mathcal{H}\left( U\right) :=pro\text{-}H_{0}\left( U,A\right)
\end{equation*}%
where $pro$-$H_{0}$ is the pro-homology functor from Definition \ref%
{Pro-homology-groups} (see also \cite{Mardesic-Segal-MR676973}, , \S %
II.3.2), is a cosheaf.

\item Let $A$ be the constant precosheaf corresponding to $A$ (Definition %
\ref{constant-AB}) on $X$ with values in $Pro\left( \mathbb{AB}\right) $.
Then $\left( A\right) _{\#}$ is naturally isomorphic to $\mathcal{H}$.
\end{enumerate}
\end{theorem}

\begin{corollary}
\label{One-point-constant}
\end{corollary}

\begin{enumerate}
\item $pro$-$\pi _{0}$ is a cosheaf.

\item $\left( \mathbf{pt}\right) _{\#}%
%TCIMACRO{\TeXButton{ISO}{\simeq}}%
%BeginExpansion
\simeq%
%EndExpansion
pro$-$\pi _{0}$ where $\mathbf{pt}$ is the one-point constant precosheaf.
\end{enumerate}

\begin{proof}
Put $S=\mathbf{pt}$ in Theorem \ref{Main-constant} (1-2).
\end{proof}

\section{Cosheaves and precosheaves}

\begin{definition}
\label{Precosheaf in D}Let $X=\left( Cat\left( X\right) ,Cov\left( X\right)
\right) $ be a site. A \textbf{precosheaf} on $X$ \textbf{with values in} $%
\mathbb{D}$ is a \textbf{covariant} functor $\mathcal{A}$ from $Cat\left(
X\right) $ to $\mathbb{D}$. \textbf{Morphisms} between precosheaves are
morphisms between the corresponding functors.
\end{definition}

Assume $\mathbb{D}$ admits small coproducts.

\begin{definition}
\label{Coseparated-precosheaf}A precosheaf $\mathcal{A}$ on a site $X=\left(
Cat\left( X\right) ,Cov\left( X\right) \right) $ with values in $\mathbb{D}$
is called \textbf{coseparated} (\textbf{epiprecosheaf} in the terminology of 
\cite{Bredon-MR0226631}, Section 1, and \cite{Bredon-Book-MR1481706},
Definition VI.1.1), iff%
\begin{equation*}
\dcoprod\limits_{i}\mathcal{A}\left( U_{i}\right) \longrightarrow \mathcal{A}%
\left( U\right)
\end{equation*}%
is an epimorphism for any covering $\left\{ U_{i}\rightarrow U\right\} \in
Cov\left( X\right) $.
\end{definition}

Assume $\mathbb{D}$ is \textbf{cocomplete} (Definition \ref{complete}).

\begin{definition}
\label{Cosheaf in D}A \textbf{cosheaf} on a site $X=\left( Cat\left(
X\right) ,Cov\left( X\right) \right) $ with values in $\mathbb{D}$ is a
precosheaf $\mathcal{A}$ such that the natural morphism%
\begin{equation*}
coker\left( \dcoprod\limits_{i,j}\mathcal{A}\left( U_{i}\times
_{U}U_{j}\right) \rightrightarrows \dcoprod\limits_{i}\mathcal{A}\left(
U_{i}\right) \right) \longrightarrow \mathcal{A}\left( U\right)
\end{equation*}%
is an isomorphism for any covering $\left\{ U_{i}\rightarrow U\right\} \in
Cov\left( X\right) $.

\begin{remark}
For cosheaves of abelian groups, the definition above is given in \cite%
{Bredon-MR0226631}, Section 1, and \cite{Bredon-Book-MR1481706}, Definition
V.1.1. To achieve more generality, and to obtain a more consistent set of
notations, we have replaced the direct sum symbol $\oplus $ in the cited
definitions by the equivalent coproduct symbol $\sqcup $.
\end{remark}
\end{definition}

Let%
\begin{equation*}
X=\left( Cat\left( X\right) ,Cov\left( X\right) \right)
\end{equation*}%
be a site. We introduce the main categories of precosheaves and cosheaves.

\begin{definition}
\label{Categories-of-(pre)cosheaves}Let us denote:

\textbf{a)} by $\mathbb{PCS}\left( X,\mathbb{D}\right) $ the category of
precosheaves on $X$ with values in $\mathbb{D}$;

\textbf{b)} ($\mathbb{D}$ is cocomplete) by $\mathbb{CS}\left( X,\mathbb{D}%
\right) $ the full subcategory of $\mathbb{PCS}\left( X,\mathbb{D}\right) $
consisting of cosheaves.
\end{definition}

\begin{definition}
\label{Ioneda-Cosheaves}Let $\mathbb{D}$ be either $\mathbb{SET}$ or $%
\mathbb{AB}$. Given a precosheaf $\mathcal{A}$ on $X$ with values in $%
Pro\left( \mathbb{D}\right) $, let $\kappa \left( \mathcal{A}\right) $ be
the following presheaf on $X$ with values in $\mathbb{D}^{\mathbb{D}}$:%
\begin{equation*}
\kappa \left( \mathcal{A}\right) \left( U\right) :=\kappa \left( \mathcal{A}%
\left( U\right) \right)
\end{equation*}%
where $\kappa :Pro\left( \mathbb{D}\right) \rightarrow \mathbb{D}^{\mathbb{D}%
}$ is the contravariant embedding from Definition \ref{k-SET}.
\end{definition}

The two Propositions below establish connections between coseparated
precosheaves and separated presheaves, and connections between cosheaves and
sheaves.

\begin{proposition}
\label{Conditions-Coseparated}Let $\mathbb{D}$ be either $\mathbb{SET}$ or $%
\mathbb{AB}$, and let $\mathcal{A}$ be a precosheaf with values in $%
Pro\left( \mathbb{D}\right) $. The following conditions are equivalent:

\begin{enumerate}
\item $\mathcal{A}$ is coseparated;

\item $\kappa \left( \mathcal{A}\right) $ is a separated presheaf with
values in $\mathbb{D}^{\mathbb{D}}$;

\item $\kappa \left( \mathcal{A}\right) \left( Z\right) $ is a separated
presheaf of sets (abelian groups) for any $Z\in \mathbb{D}$.
\end{enumerate}
\end{proposition}

\begin{proof}
$\left( 1\right) \Longleftrightarrow \left( 2\right) $ follows from
Proposition \ref{k-SET-Remark} (\ref{DD-vs-Pro(D)-colimits}, \ref%
{DD-vs-Pro(D)-epi}).

$\left( 2\right) \Longleftrightarrow \left( 3\right) $ follows from
Proposition \ref{k-SET-Remark} (\ref{DD-vs-D-limits}, \ref{DD-vs-D-mono-epi}%
).
\end{proof}

\begin{proposition}
\label{Conditions-Cosheaf}Let $\mathbb{D}$ be either $\mathbb{SET}$ or $%
\mathbb{AB}$, and let $\mathcal{A}$ be a precosheaf with values in $%
Pro\left( \mathbb{D}\right) $. The following conditions are equivalent:

\begin{enumerate}
\item $\mathcal{A}$ is a cosheaf;

\item $\kappa \left( \mathcal{A}\right) $ is a sheaf with values in $\mathbb{%
D}^{\mathbb{D}}$;

\item $\kappa \left( \mathcal{A}\right) \left( Z\right) $ is a sheaf of sets
(abelian groups) for any $Z\in \mathbb{D}$.
\end{enumerate}
\end{proposition}

\begin{proof}
$\left( 1\right) \Longleftrightarrow \left( 2\right) $ follows from
Proposition \ref{k-SET-Remark} (\ref{DD-vs-Pro(D)-colimits}).

$\left( 2\right) \Longleftrightarrow \left( 3\right) $ follows from
Proposition \ref{k-SET-Remark} (\ref{DD-vs-D-limits}).
\end{proof}

\subsection{Cosheaves and precosheaves on topological spaces}

Throughout this Subsection, $X$ is a topological space considered as a site $%
OPEN\left( X\right) $ (see Example \ref{Site-TOP} and Remark \ref%
{Denote-standard-site-simply}).

\begin{proposition}
\label{Prop-Empty-value}

\begin{enumerate}
\item Let $\mathcal{A}$ be a cosheaf with values in $\mathbb{D}$. Then $%
\mathcal{A}\left( \varnothing \right) $ is an initial object in $\mathbb{D}$.

\item Let $\mathcal{A}$ be a coseparated precosheaf with values in $\mathbb{D%
}$ where $\mathbb{D}$ is either $\mathbb{SET}$, or $Pro\left( \mathbb{SET}%
\right) $, or an additive category. Then $\mathcal{A}\left( \varnothing
\right) $ is an initial object in $\mathbb{D}$.
\end{enumerate}
\end{proposition}

\begin{proof}
Let $\left\{ U_{i}\rightarrow \varnothing \right\} _{i\in I}$ be the empty
covering, i.e. the set of indices $I$ is empty. Then, due to Remark \ref%
{Empty-diagram},%
\begin{equation*}
\dcoprod\limits_{i\in I}\mathcal{A}\left( U_{i}\right)
\end{equation*}%
is an initial object in $\mathbb{D}$.

\begin{enumerate}
\item If $\mathcal{A}$ is a cosheaf, then%
\begin{equation*}
\mathcal{A}\left( \varnothing \right) =coker\left( \dcoprod\limits_{i\in
\varnothing }\mathcal{A}\left( U_{i}\right) \rightrightarrows
\dcoprod\limits_{\left( i,j\right) \in \varnothing }\mathcal{A}\left(
U_{i}\cap U_{j}\right) \right) =coker\left( I\rightrightarrows I\right) 
%TCIMACRO{\TeXButton{ISO}{\simeq}}%
%BeginExpansion
\simeq%
%EndExpansion
I
\end{equation*}%
where $I$ is the initial object.

\item In the case $\mathcal{A}$ is coseparated, one has an epimorphism%
\begin{equation*}
I=\dcoprod\limits_{i\in \varnothing }\mathcal{A}\left( U_{i}\right)
\longrightarrow \mathcal{A}\left( \varnothing \right) .
\end{equation*}%
If $\mathbb{D}$ is $\mathbb{SET}$ or $Pro\left( \mathbb{SET}\right) $, the
initial object is the empty set, therefore $\mathcal{A}\left( \varnothing
\right) $ is empty as well. If $\mathbb{D}$ is additive, any initial object
is a zero object, therefore $\mathcal{A}\left( \varnothing \right) $ is zero
as well.
\end{enumerate}
\end{proof}

\begin{remark}
There exist categories with an epimorphism $J\rightarrow X$ where $J$ is an
initial object, while $X$ is not.
\end{remark}

\begin{corollary}
\label{Corr-Empty-value}If, in the conditions of Proposition \ref%
{Prop-Empty-value}, $\mathbb{D}$ is $\mathbb{SET}$ or $Pro\left( \mathbb{SET}%
\right) $, then $\mathcal{A}\left( \varnothing \right) =\varnothing $. If $%
\mathbb{D}$ is $\mathbb{AB}$ or $Pro\left( \mathbb{AB}\right) $, then $%
\mathcal{A}\left( \varnothing \right) =0$.
\end{corollary}

\begin{corollary}
\label{AB-cosheaf-is-not-SET-cosheaf}A cosheaf with values in $\mathbb{AB}$
or $Pro\left( \mathbb{AB}\right) $ \textbf{is never a cosheaf} when
considered as a precosheaf with values in $\mathbb{SET}$ or $Pro\left( 
\mathbb{SET}\right) $.
\end{corollary}

\begin{definition}
\label{constant}Let $S$ be a set. We denote by the same letter $S$ the
following \textbf{constant} precosheaf on $X$ with values in $\mathbb{SET}$
or $Pro\left( \mathbb{SET}\right) $: $S\left( U\right) 
%TCIMACRO{\TeXButton{assigned}{{:=}}}%
%BeginExpansion
{:=}%
%EndExpansion
S$ for all $U$.
\end{definition}

\begin{definition}
\label{constant-AB}Let $A$ be an abelian group. Analogously to Definition %
\ref{constant}, denote by the same letter $A$ the following precosheaf on $X$
with values in $\mathbb{AB}$ or $Pro\left( \mathbb{AB}\right) $: $A\left(
U\right) 
%TCIMACRO{\TeXButton{assigned}{{:=}}}%
%BeginExpansion
{:=}%
%EndExpansion
A$ for all $U$.
\end{definition}

To introduce local isomorphisms, one needs the notion of a \emph{costalk},
which is dual to the notion of a stalk (Definition \ref{Stalk}) in sheaf
theory.

\begin{definition}
\label{Costalk}Let $\mathbb{D}$ be a category, and assume that $\mathbb{D}$
admits cofiltrant limits. Let $x\in X$ be a point. Let further $\mathcal{A}$
be a precosheaf on $X$ with values in $\mathbb{D}$. Denote%
\begin{equation*}
\mathcal{A}^{x}%
%TCIMACRO{\TeXButton{assigned}{{:=}}}%
%BeginExpansion
{:=}%
%EndExpansion
\lim_{x\in U}\mathcal{A}\left( U\right) .
\end{equation*}%
We will call $\mathcal{A}^{x}$ the \textbf{costalk} of $\mathcal{A}$ at $x$.
\end{definition}

\begin{example}
If $\mathcal{A}$ is a precosheaf of sets (abelian groups) on $X$, then $%
\mathcal{A}^{x}$ is the limit $\lim_{x\in U}\mathcal{A}\left( U\right) $ in $%
\mathbb{SET}$ ($\mathbb{AB}$). However, if the same precosheaf is considered
as a precosheaf of pro-sets (abelian pro-groups), then $\mathcal{A}^{x}$ is
the pro-set (pro-group) defined by the cofiltrant system%
\begin{equation*}
\mathcal{A}^{x}=\left( \mathcal{A}\left( U\right) :x\in U\right) .
\end{equation*}
\end{example}

\begin{remark}
\label{Costalk-always-in-Pro(D)}Since $\lim_{x\in U}\mathcal{A}\left(
U\right) $ in $\mathbb{SET}$ ($\mathbb{AB}$) is a rather useless object, we
will use notation $\mathcal{A}^{x}$ for $\lim_{x\in U}\mathcal{A}\left(
U\right) $ \textbf{only} in $Pro\left( \mathbb{SET}\right) $ ($Pro\left( 
\mathbb{AB}\right) $), even when the precosheaf $\mathcal{A}$ takes values
in $\mathbb{SET}$ ($\mathbb{AB}$).
\end{remark}

\begin{example}
Let $\mathcal{A}$ is a precosheaf of abelian groups on $X$. According to 
\cite{Bredon-MR0226631}, Section 2, or \cite{Bredon-Book-MR1481706},
Definition V.12.1, $\mathcal{A}$ is called \emph{locally zero} iff for any $%
x\in X$ and any open neighborhood $U$ of $x$ there exists another open
neighborhood $V$, $x\in V\subseteq U$, such that $\mathcal{A}\left( V\right)
\rightarrow \mathcal{A}\left( U\right) $ is zero. If we consider, however,
the precosheaf $\mathcal{A}$ as a precosheaf of abelian pro-groups, then $%
\mathcal{A}$ is locally zero iff for any $x\in X$, $\mathcal{A}^{x}$ is the
zero object in the category $Pro\left( \mathbb{AB}\right) $.
\end{example}

\begin{definition}
A precosheaf $\mathcal{A}$ of abelian (pro-)groups on $X$ is called \textbf{%
locally zero} if $\mathcal{A}^{x}=0$ for any $x\in X$.
\end{definition}

\begin{definition}
\label{local-isomorphism}Let $\mathcal{A\rightarrow B}$ be a morphism of
precosheaves (with values anywhere) on $X$. It is called a \textbf{local
isomorphism} iff $\mathcal{A}^{x}\longrightarrow \mathcal{B}^{x}$ is an
isomorphism for any $x\in X$.
\end{definition}

\begin{proposition}
\label{Local-isomorphism-Bredon}Let $f:\mathcal{A\rightarrow B}$ be a
morphism of precosheaves on $X$ with values in $\mathbb{AB}$ or $Pro\left( 
\mathbb{AB}\right) $. Then $f$ is a local isomorphism iff both $\ker \left(
f\right) $ and $coker\left( f\right) $ are locally zero.
\end{proposition}

\begin{proof}
Since cofiltrant limits are exact (Proposition \ref{k-SET-Remark} (\ref%
{Cofiltrant-limits-exact})), the sequence%
\begin{equation*}
\left( \ker \left( f\right) \right) ^{x}\longrightarrow \mathcal{A}^{x}%
\overset{f^{x}}{\longrightarrow }\mathcal{B}^{x}\longrightarrow \left(
coker\left( f\right) \right) ^{x}
\end{equation*}%
is exact. Since $Pro\left( \mathbb{AB}\right) $ is an abelian category (\cite%
{Kashiwara-MR2182076}, Chapter 8.6), $f^{x}$ is an isomorphism iff both $%
\left( \ker \left( f\right) \right) ^{x}$ and $\left( coker\left( f\right)
\right) ^{x}$ are zero.
\end{proof}

\begin{remark}
It follows from Proposition \ref{Local-isomorphism-Bredon} that a morphism $%
f:\mathcal{A}\rightarrow \mathcal{B}$ of precosheaves of abelian groups is a
local isomorphism in the sense of \cite{Bredon-MR0226631}, Section 3, or 
\cite{Bredon-Book-MR1481706}, Definition V.12.2, iff it is a local
isomorphism in our sense.
\end{remark}

\begin{lemma}
\label{Local-iso-cosheaf-vs-sheaf}Let $\mathcal{A\rightarrow B}$ be a
morphism of precosheaves on $X$ with values in $Pro\left( \mathbb{SET}%
\right) $ ($Pro\left( \mathbb{AB}\right) $). Then it is a local isomorphism
iff%
\begin{equation*}
\left( \kappa \left( \mathcal{B}\right) \left( Z\right) \right)
_{x}\longrightarrow \left( \kappa \left( \mathcal{A}\right) \left( Z\right)
\right) _{x}
\end{equation*}%
is an isomorphism for any set (abelian group) $Z$ and any $x\in X$.
\end{lemma}

\begin{proof}
It follows from the fact that $\kappa $ is a full contravariant embedding
which converts cofiltrant limits to filtrant colimits (Proposition \ref%
{k-SET-Remark} (\ref{DD-vs-Pro(D)-limits})).
\end{proof}

\begin{proposition}
\label{local-isomorphism-cosheaves}Let $\mathbb{D}$ be either $\mathbb{SET}$
or $\mathbb{AB}$, and let $\varphi :\mathcal{A\rightarrow B}$ be a local
isomorphism of cosheaves on $X$ with values in $Pro\left( \mathbb{D}\right) $%
. Then $\varphi $ is an isomorphism.
\end{proposition}

\begin{proof}
Apply Proposition \ref{Sheaves-OPEN(X)-DD} to the morphism of presheaves%
\begin{equation*}
\kappa \left( \varphi \right) :\kappa \left( \mathcal{A}\right)
\longrightarrow \kappa \left( \mathcal{B}\right)
\end{equation*}%
with values in $\mathbb{D}^{\mathbb{D}}$.
\end{proof}

\section{\label{Cosheaves-with-values-in-SET-and-AB}Examples: cosheaves and
precosheaves with values in $\mathbb{SET}$ and $\mathbb{AB}$}

Below is a series of examples of various precosheaves with values anywhere.

\begin{example}
\label{Non-smooth-precosheaf}Let $X=I=\left[ 0,1\right] $, and let $\mathcal{%
A}$ assigns to $U$ the group $S_{1}\left( U,\mathbb{Z}\right) $ of singular $%
1$-chains on $U$. It is proved in \cite{Bredon-MR0226631}, Remark 5.9, and 
\cite{Bredon-Book-MR1481706}, Example VI.5.9, that this precosheaf of
abelian groups is not smooth.
\end{example}

The above example can be improved:

\begin{example}
\label{Singular-precosheaf}Let $\Sigma _{n}\left( \_,A\right) $ be a
precosheaf that assigns to $U$ the colimit of the following sequence:%
\begin{equation*}
S_{n}\left( U,A\right) \overset{\mathbf{ba}}{\longrightarrow }S_{n}\left(
U,A\right) \overset{\mathbf{ba}}{\longrightarrow }S_{n}\left( U,A\right) 
\overset{\mathbf{ba}}{\longrightarrow }...
\end{equation*}%
where $S_{n}\left( U,A\right) $ is the group of singular $A$-valued $n$%
-chains on $U$, and $\mathbf{ba}$ is the barycentric subdivision. It is
proved in \cite{Bredon-MR0226631}, Section 10, and \cite%
{Bredon-Book-MR1481706}, Proposition VI.12.1, that $\Sigma _{n}\left(
\_,A\right) $ is a cosheaf of abelian groups (and of abelian pro-groups, due
to Theorem \ref{Our-cosheaves-vs-Bredon}).
\end{example}

\begin{example}
\label{p0-prime-not-cosheaf}Let $\pi $ be a precosheaf of sets that assigns
to $U$ the set $\pi \left( U\right) $ of connected components of $U$. This
precosheaf is coseparated. If $X$ is \textbf{locally connected}, then, for
any open subset $U\subseteq X$, the pro-homotopy set $pro$-$\pi _{0}\left(
U\right) $ is isomorphic to the \textbf{trivial} (Remark \ref%
{Trivial-pro-object}) pro-set $\pi \left( U\right) $. It follows from
Theorem \ref{Our-cosheaves-vs-Bredon}, that $\pi 
%TCIMACRO{\TeXButton{ISO}{\simeq}}%
%BeginExpansion
\simeq%
%EndExpansion
\left( \mathbf{pt}\right) _{\#}$ where $\mathbf{pt}$ is the one-point
constant precosheaf. Therefore, $\mathbf{pt}$ is smooth, and $\pi $ a
constant cosheaf (compare to \cite{Bredon-MR0226631}, Remark 5.11).

In general, if $X$ is \textbf{not} locally connected, $\pi $ is \textbf{not
a cosheaf}. Indeed, let 
\begin{equation*}
X=Y\cup Z\subseteq \mathbb{R}^{2},
\end{equation*}%
where $Y$ is the line segment between the points $\left( 0,1\right) $ and $%
\left( 0,-1\right) $, and $Y$ is the graph of $y=\sin \left( \frac{1}{x}%
\right) $ for $0<x\leq 2\pi $. Let further%
\begin{eqnarray*}
X &=&U=U_{1}\cup U_{2}, \\
U_{1} &=&\left\{ \left( x,y\right) \in X~|~y>-\frac{1}{2}\right\} , \\
U_{2} &=&\left\{ \left( x,y\right) \in X~|~y<\frac{1}{2}\right\} .
\end{eqnarray*}%
$X$ is a connected (not locally connected!) compact metric space. Take $%
P=\left( 0,1\right) \in U_{1}$ and $Q=\left( \frac{3\pi }{2},-1\right) \in
U_{2}$. Since $U=X$ is connected, these two points are mapped to the same
point of $U$ under the canonical mapping%
\begin{equation*}
U_{1}\sqcup U_{2}\longrightarrow U.
\end{equation*}%
However, these two points define \textbf{different} elements of the colimit%
\begin{equation*}
coker\left( \pi \left( U_{1}\cap U_{2}\right) \rightrightarrows \pi \left(
U_{1}\right) \sqcup \pi \left( U_{1}\right) \right) .
\end{equation*}%
Therefore,%
\begin{equation*}
coker\left( \pi \left( U_{1}\cap U_{2}\right) \rightrightarrows \pi \left(
U_{1}\right) \sqcup \pi \left( U_{1}\right) \right) \longrightarrow \pi
\left( U\right) =\pi \left( X\right)
\end{equation*}%
is not injective, and $\pi $ is not a cosheaf.

See also Example \ref{Converging-sequence}.
\end{example}

\begin{example}
\label{p0-is-cosheaf}Let $\pi _{0}$ be a precosheaf of sets that assigns to $%
U$ the set $\pi _{0}\left( U\right) $ of path-connected components of $U$.
Then $\pi _{0}$ is a cosheaf of sets (and of pro-sets, due to Theorem \ref%
{Our-cosheaves-vs-Bredon}). This cosheaf is constant if $X$ is locally
path-connected, and is not constant in general. Indeed, $\pi _{0}$ is
clearly coseparated. Let $\left\{ U_{i}\rightarrow U\right\} _{i\in I}$ be
an open covering, and let $P\in U_{s}$ and $Q\in U_{t}$ be two points lying
in the same path-connected component. Therefore, there exists a continuous
path%
\begin{equation*}
g:\left[ 0,1\right] \longrightarrow U
\end{equation*}%
with $g\left( 0\right) =P$ and $g\left( 1\right) =Q$. Using Lebesgue's
Number Lemma, one proves that $P$ and $Q$ define equal elements of the
cokernel below. Therefore, the mapping%
\begin{equation*}
coker\left( \dcoprod\limits_{i,j}\pi _{0}\left( U_{i}\cap U_{j}\right)
\rightrightarrows \dcoprod\limits_{i}\pi _{0}\left( U_{i}\right) \right)
\longrightarrow \pi _{0}\left( U\right)
\end{equation*}%
is injective, thus bijective, and $\pi _{0}$ is a cosheaf.
\end{example}

\begin{example}
\label{H0-is-cosheaf-in-AB}Let $A$ be an abelian group, and let $%
H_{0}^{S}\left( \_,A\right) $ be the precosheaf of abelian groups that
assigns to $U$ the zeroth singular homology group $H_{0}^{S}\left(
X,A\right) $. Then $H_{0}^{S}\left( \_,A\right) $ is a cosheaf. Indeed,%
\begin{equation*}
H_{0}^{S}=coker\left( \Sigma _{1}\left( \_,A\right) \longrightarrow \Sigma
_{0}\left( \_,A\right) \right)
\end{equation*}%
where $\Sigma _{n}\left( \_,A\right) $ is the cosheaf from Example \ref%
{Singular-precosheaf}. The embedding%
\begin{equation*}
\mathbb{CS}\left( X,Pro\left( \mathbb{AB}\right) \right) \longrightarrow 
\mathbb{PCS}\left( X,Pro\left( \mathbb{AB}\right) \right) ,
\end{equation*}%
being the left adjoint to $\left( {}\right) _{\#}$, commutes with colimits.
Therefore, $H_{0}^{S}\left( \_,A\right) $ is a cosheaf because $\Sigma
_{1}\left( \_,A\right) $ and $\Sigma _{0}\left( \_,A\right) $ are cosheaves. 
$H_{0}^{S}\left( \_,A\right) $ is constant if $X$ is locally path-connected.
However, $H_{0}^{S}\left( \_,A\right) $ is not constant in general, see
Example \ref{Converging-sequence}.
\end{example}

\begin{example}
\label{Converging-sequence}Let $X$ be the following sequence converging to
zero (together with the limit):%
\begin{equation*}
X=\left\{ 0\right\} \cup \left\{ 1,\frac{1}{2},\frac{1}{3},\frac{1}{4}%
,...\right\} \subseteq \mathbb{R}.
\end{equation*}%
The precosheaves $\pi $ and $\pi _{0}$ from Examples \ref%
{p0-prime-not-cosheaf} and \ref{p0-is-cosheaf} coincide on $X$. Therefore, $%
\pi =\pi _{0}$ is a cosheaf. However, it is not constant. To see this, just
compare the costalks at different points $x\in X$: $\left( \pi \right)
_{x}=\left\{ \mathbf{pt}\right\} $ if $x\neq 0$, while $\left( \pi \right)
_{0}$ is a \textbf{non-trivial} (Remark \ref{Trivial-pro-object}) pro-set.
Consider the constant precosheaf $\mathbf{pt}$. Due to Theorem \ref%
{Main-constant}, $\left( \mathbf{pt}\right) _{\#}%
%TCIMACRO{\TeXButton{ISO}{\simeq}}%
%BeginExpansion
\simeq%
%EndExpansion
pro$-$\pi _{0}$. The latter cosheaf does \textbf{not} take values in $%
\mathbb{SET}$,\ therefore, due to Theorem \ref{Our-cosheaves-vs-Bredon}, the
precosheaf $\mathbf{pt}$ is \textbf{not} smooth. Similarly, it can be
proved, that the cosheaf $H_{0}^{S}\left( \_,A\right) $ from Example \ref%
{H0-is-cosheaf-in-AB} is not constant on $X$, while the constant precosheaf $%
A$ is not smooth, because $\left( A\right) _{\#}%
%TCIMACRO{\TeXButton{ISO}{\simeq}}%
%BeginExpansion
\simeq%
%EndExpansion
pro$-$H_{0}\left( \_,A\right) $ does not take values in $\mathbb{AB}$.
\end{example}

\begin{example}
\label{H0-is-cosheaf}Let $X$ is locally connected and locally compact
Hausdorff space, and let $\mathcal{A}$ be a sheaf. Then $H_{0}^{c}\left( X,%
\mathcal{A}\right) $ where $H_{0}^{c}$ is the Borel-Moore homology with
compact supports, is a cosheaf. If $X$ is locally path-connected, and $%
\mathcal{A}$ is constant, then this cosheaf is constant. See \cite%
{Bredon-Book-MR1481706}, Proposition VI.12.2.
\end{example}

\section{\label{Proofs}Proofs of the main results}

\subsection{\label{Plus-construction}\label{Cosheafification}Cosheafification%
}

\begin{definition}
Let $\left\{ U_{i}\rightarrow U\right\} \in Cov\left( X\right) $, and let $%
\mathcal{A}$ be a precosheaf with values in $Pro\left( \mathbb{SET}\right) $
($Pro\left( \mathbb{AB}\right) $). Define%
\begin{equation*}
H_{0}\left( \left\{ U_{i}\longrightarrow U\right\} ,\mathcal{A}\right)
:=coker\left( \dcoprod\limits_{i,j}\mathcal{A}\left( U_{i}\times
_{U}U_{j}\right) \rightrightarrows \dcoprod\limits_{i}\mathcal{A}\left(
U_{i}\right) \right) .
\end{equation*}%
The definition is correct since both $Pro\left( \mathbb{SET}\right) $ and $%
Pro\left( \mathbb{AB}\right) $ are cocomplete.
\end{definition}

\begin{proposition}
\label{H0-vs-H0}%
\begin{equation*}
\kappa \left( H_{0}\left( \left\{ U_{i}\longrightarrow U\right\} ,A\right)
\right) =\ker \left( \dprod\limits_{i}\kappa \left( \mathcal{A}\left(
U_{i}\right) \right) \rightrightarrows \dprod\limits_{i,j}\kappa \left( 
\mathcal{A}\left( U_{i}\times _{U}U_{j}\right) \right) \right) =H^{0}\left(
\left\{ U_{i}\longrightarrow U\right\} ,\kappa \left( \mathcal{A}\right)
\right)
\end{equation*}%
where $H^{0}$ is the functor from Definition \ref{H0-sheaves}.
\end{proposition}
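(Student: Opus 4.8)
The plan is to reduce both equalities to a single fact: the contravariant functor $\kappa$ converts colimits in $Pro(\mathbb{SET})$ into limits in $\mathbb{SET}^{\mathbb{SET}}$. Once that is in hand, everything is a formal substitution of definitions.

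First I would dispose of the right-hand equality, which is just unwinding notation. By Definition \ref{Ioneda-Coshieves} one has $\kappa(\mathcal{A})(V)=\kappa(\mathcal{A}(V))$ for every object $V$, and on a topological space $U_i\times_U U_j=U_i\cap U_j$, so the two symbols that appear in the definitions of $H_0$ and of $H^0$ name the same object. Hence the middle expression is literally $\ker\bigl(\dprod_i\kappa(\mathcal{A})(U_i)\rightrightarrows\dprod_{i,j}\kappa(\mathcal{A})(U_i\cap U_j)\bigr)$, which is by definition $H^0(\{U_i\to U\},\kappa(\mathcal{A}))$.

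For the left-hand equality I would start from the definition $H_0(\{U_i\to U\},\mathcal{A})=coker\bigl(\dcoprod_{i,j}\mathcal{A}(U_i\times_U U_j)\rightrightarrows\dcoprod_i\mathcal{A}(U_i)\bigr)$ and apply $\kappa$. Since for each set $Z$ the functor $Hom_{Pro(\mathbb{SET})}(-,Z)$ sends colimits to limits, and limits in the functor category $\mathbb{SET}^{\mathbb{SET}}$ are computed pointwise, the functor $\kappa\colon Z\mapsto Hom_{Pro(\mathbb{SET})}(-,Z)$ sends colimits in $Pro(\mathbb{SET})$ to limits; in particular it sends coproducts to products (Proposition \ref{k-SET-Remark}) and the coequalizer $coker$ to the equalizer $\ker$, with the two arrows reversed. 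Applying this termwise to the coequalizer diagram above turns it into $\ker\bigl(\dprod_i\kappa(\mathcal{A}(U_i))\rightrightarrows\dprod_{i,j}\kappa(\mathcal{A}(U_i\times_U U_j))\bigr)$, which is the middle expression.

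The only point that carries any content — hence the ``main obstacle'', such as it is — is the justification that $\kappa$ may be pushed through the coequalizer, i.e. that it carries the relevant coproducts to products and the relevant cokernel to a kernel. This is exactly what Proposition \ref{k-SET-Remark} records (together with the fact that $\kappa$ is a full contravariant embedding), and once it is cited the argument is just the substitution performed above. I do not expect any genuine difficulty beyond keeping straight the variance of $\kappa$ and the identification $U_i\cap U_j=U_i\times_U U_j$.
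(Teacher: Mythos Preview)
Your proposal is correct and follows exactly the paper's approach: the paper's entire proof is the single sentence ``The functor $\kappa$ converts colimits to limits,'' which is precisely the content you spell out in more detail, citing Proposition \ref{k-SET-Remark}. One small remark: the identification $U_i\times_U U_j=U_i\cap U_j$ is specific to the topological case, whereas this proposition sits in the general-site context; the argument goes through verbatim with $U_i\times_U U_j$ throughout, so this is only a notational point.
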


\begin{proof}
The functor $\kappa $ converts colimits (e.g., coproducts and cokernels) to
limits (e.g., products and kernels).
\end{proof}

\begin{lemma}
Given two coverings $\mathcal{V},\mathcal{U}\in Cov\left( X\right) $, and
two refinement mappings $f,g:\mathcal{V\rightarrow U}$, the corresponding
mappings of cokernels coincide:%
\begin{equation*}
H_{0}\left( f,\mathcal{A}\right) =H_{0}\left( g,\mathcal{A}\right)
:H_{0}\left( \mathcal{V},\mathcal{A}\right) \longrightarrow H_{0}\left( 
\mathcal{U},\mathcal{A}\right) .
\end{equation*}
\end{lemma}

\begin{proof}
Apply Proposition \ref{H0-vs-H0} and Lemma \ref{Two-refinements}.
\end{proof}

\begin{definition}
Given $U\in Cat\left( X\right) $, the set of coverings on $U$ is a
cofiltrant pre-ordered set under the refinement relation: $\mathcal{V}\leq 
\mathcal{U}$ iff $\mathcal{V}$ refines $\mathcal{U}$. Let us define%
\begin{equation*}
\mathcal{A}_{+}\left( U\right) :=\lim_{\mathcal{V}}H_{0}\left( \mathcal{V},%
\mathcal{A}\right)
\end{equation*}%
where $\mathcal{V}$ runs over coverings on $U$. $\mathcal{A}_{+}$ is clearly
a precosheaf in $Pro\left( \mathbb{SET}\right) $ ($Pro\left( \mathbb{AB}%
\right) $).
\end{definition}

\begin{proposition}
\label{Plus-cosheaf-vs-sheaf}%
\begin{equation*}
\kappa \left( \mathcal{A}_{+}\right) 
%TCIMACRO{\TeXButton{ISO}{\simeq}}%
%BeginExpansion
\simeq%
%EndExpansion
\left( \kappa \left( \mathcal{A}\right) \right) ^{+}.
\end{equation*}
\end{proposition}

\begin{proof}
Follows from Propositions \ref{H0-vs-H0} and \ref{k-SET-Remark} (\ref%
{DD-vs-Pro(D)-colimits}, \ref{DD-vs-Pro(D)-limits}).
\end{proof}

\begin{proposition}
\label{Properties-Plus-construction-cosheaves}Let $\mathbb{D}$ be either $%
\mathbb{SET}$ or $\mathbb{AB}$, and let $\mathcal{A}$ be a cosheaf on $X$
with values in $Pro\left( \mathbb{D}\right) $. Then:

\begin{enumerate}
\item $\mathcal{A}_{+}$ is coseparated.

\item If $\mathcal{A}$ is coseparated, then $\mathcal{A}_{+}$ is a cosheaf.

\item The functor%
\begin{equation*}
\left( {}\right) _{\#}:=\left( {}\right) _{++}:\mathbb{PCS}\left(
X,Pro\left( \mathbb{D}\right) \right) \longrightarrow \mathbb{CS}\left(
X,Pro\left( \mathbb{D}\right) \right)
\end{equation*}%
is right adjoint to the inclusion functor.
\end{enumerate}
\end{proposition}

\begin{proof}
Apply Proposition \ref{Properties-Plus-construction} to $\kappa \left( 
\mathcal{A}\right) $.
\end{proof}

\subsection{Proof of Theorem \protect\ref{Main-Pro-Set}}

\begin{proof}
Let $\mathbb{D}$ be either $\mathbb{SET}$ or $\mathbb{AB}$. The right
adjointness of the functor%
\begin{equation*}
\left( {}\right) _{\#}:\mathbb{P}\mathbb{CS}\left( X,Pro\left( \mathbb{D}%
\right) \right) \longrightarrow \mathbb{CS}\left( X,Pro\left( \mathbb{D}%
\right) \right)
\end{equation*}%
is already proved (Proposition \ref{Properties-Plus-construction-cosheaves}%
). It remains only to prove that $\left( {}\right) _{\#}$ is a reflector.
Let $\mathcal{A}$ be a cosheaf with values in $Pro\left( \mathbb{D}\right) $%
. Let further $\mathcal{B}$ be an arbitrary cosheaf with values in $%
Pro\left( \mathbb{D}\right) $. It is enough to prove that%
\begin{equation*}
Hom_{\mathbb{P}\mathbb{CS}\left( X,\mathbb{D}\right) }\left( \mathcal{B},%
\mathcal{A}_{\#}\right) 
%TCIMACRO{\TeXButton{ISO}{\simeq}}%
%BeginExpansion
\simeq%
%EndExpansion
Hom_{\mathbb{CS}\left( X,\mathbb{D}\right) }\left( \mathcal{B},\mathcal{A}%
\right)
\end{equation*}%
naturally on $\mathcal{B}$. There exist natural on $\mathcal{B}$ isomorphisms%
\begin{eqnarray*}
&&Hom_{\mathbb{P}\mathbb{CS}\left( X,\mathbb{D}\right) }\left( \mathcal{B},%
\mathcal{A}_{\#}\right) 
%TCIMACRO{\TeXButton{ISO}{\simeq}}%
%BeginExpansion
\simeq%
%EndExpansion
Hom_{\mathbb{CS}\left( X,\mathbb{D}\right) }\left( \mathcal{B},\mathcal{A}%
_{\#}\right) 
%TCIMACRO{\TeXButton{ISO}{\simeq} }%
%BeginExpansion
\simeq
%EndExpansion
\\
&&Hom_{\mathbb{P}\mathbb{CS}\left( X,\mathbb{D}\right) }\left( \mathcal{B},%
\mathcal{A}\right) 
%TCIMACRO{\TeXButton{ISO}{\simeq}}%
%BeginExpansion
\simeq%
%EndExpansion
Hom_{\mathbb{CS}\left( X,\mathbb{D}\right) }\left( \mathcal{B},\mathcal{A}%
\right) .
\end{eqnarray*}%
The first and the last isomorphisms are due to the full embedding of $%
\mathbb{CS}\left( X,\mathbb{D}\right) $ into $\mathbb{P}\mathbb{CS}\left( X,%
\mathbb{D}\right) $, while the second isomorphism is the adjunction. It
follows that $\mathcal{A}_{\#}%
%TCIMACRO{\TeXButton{ISO}{\simeq}}%
%BeginExpansion
\simeq%
%EndExpansion
\mathcal{A}$.
\end{proof}

\begin{remark}
The reasoning above can be generalized to any full embedding 
\begin{equation*}
I:\mathbb{E}\longrightarrow \mathbb{F}.
\end{equation*}%
If such an embedding has a right or a left adjoint $G$, then $G$ is clearly
a reflector.
\end{remark}

\subsection{Proof of Theorem \protect\ref{Main-local-iso-pro-SET}}

\begin{proof}
(2) is already proved (Proposition \ref{local-isomorphism-cosheaves}).

To prove (1), apply Proposition \ref{Sheaves-OPEN(X)-DD}(2) to $\kappa
\left( \mathcal{A}\right) $.

To prove (3), consider the composition%
\begin{equation*}
\mathcal{B}\longrightarrow \mathcal{A}_{\#}\longrightarrow \mathcal{A},
\end{equation*}%
existing due to the right adjointness of $\left( {}\right) _{\#}$. The
composition and the morphism $\mathcal{A}_{\#}\longrightarrow \mathcal{A}$
are local isomorphisms, therefore $\mathcal{B}\longrightarrow \mathcal{A}%
_{\#}$ is a local isomorphism between cosheaves, hence an isomorphism.
\end{proof}

\subsection{Proof of Theorem \protect\ref{Our-cosheaves-vs-Bredon}}

\begin{proof}
\begin{enumerate}
\item The full embedding $\mathbb{D}\hookrightarrow Pro\left( \mathbb{D}%
\right) $ commutes with small colimits (e.g., cokernels and small
coproducts), and preserves epimorphisms, i.e. $f:X\rightarrow Y$ is an
epimorphism in $\mathbb{D}$ iff $f$ is an epimorphism in $Pro\left( \mathbb{D%
}\right) $ (see Proposition \ref{k-SET-Remark} (\ref%
{Pro(D)-vs-D-limits-colimits}, \ref{Pro(D)-vs-D-mono-epi})).

\item \textbf{Only if} part. Assume $\mathcal{A}$ is smooth. Then there
exists local isomorphisms%
\begin{equation*}
\mathcal{A}\longrightarrow \mathcal{B}\longleftarrow \mathcal{C}
\end{equation*}%
of precosheaves with values in $\mathbb{D}$, where $\mathcal{C}$ is a
cosheaf. Apply $\left( {}\right) _{\#}$ and obtain the following diagram of
isomorphisms of cosheaves with values in $Pro\left( \mathbb{D}\right) $:%
\begin{equation*}
\mathcal{A}_{\#}\longrightarrow \mathcal{B}_{\#}\longleftarrow \mathcal{C}%
_{\#}%
%TCIMACRO{\TeXButton{ISO}{\simeq}}%
%BeginExpansion
\simeq%
%EndExpansion
\mathcal{C}.
\end{equation*}%
It follows that $\mathcal{A}_{\#}%
%TCIMACRO{\TeXButton{ISO}{\simeq}}%
%BeginExpansion
\simeq%
%EndExpansion
\mathcal{C}$, and therefore $\mathcal{A}_{\#}$ takes values in $\mathbb{D}$.

\textbf{If} part. Assume $\mathcal{A}$ takes values in $\mathbb{D}$. Then $%
\mathcal{A}_{\#}\longrightarrow \mathcal{A}$ is the desired local
isomorphism.
\end{enumerate}
\end{proof}

\subsection{Pro-homotopy and pro-homology}

Let $\mathbb{TOP}\ $be the category of topological spaces and continuous
mappings. There are following categories closely connected to $\mathbb{TOP}$%
: the category $H\left( \mathbb{TOP}\right) $ of homotopy types, the
category $Pro\left( H\left( \mathbb{TOP}\right) \right) $ of pro-homotopy
types, and the category $H\left( Pro\left( \mathbb{TOP}\right) \right) $ of
homotopy types of pro-spaces. The last category is used in \emph{strong
shape theory}. It is finer than the second one which is used in \emph{shape
theory}. One of the most important tools in strong shape theory is a \emph{%
strong expansion} (see \cite{Mardesic-MR1740831}, conditions (S1) and (S2)
on p. 129). In this paper, it is sufficient to use a weaker notion: an $%
H\left( \mathbb{TOP}\right) $-\emph{expansion} (\cite%
{Mardesic-Segal-MR676973}, \S I.4.1, conditions (E1) and (E2)). Those two
conditions are equivalent to the following

\begin{definition}
\label{HTOP-extension}Let $X$ be a topological space. A morphism in $%
Pro\left( H\left( \mathbb{TOP}\right) \right) $%
\begin{equation*}
X\longrightarrow \left( Y_{j}\right)
\end{equation*}%
is called an $H\left( \mathbb{TOP}\right) $-expansion (or simply \emph{%
expansion}) if for any polyhedron (equivalently, an ANR) $P$ the following
mapping%
\begin{equation*}
\underset{j}{colim}~\left[ Y_{j},P\right] =\underset{j}{colim}~Hom_{H\left( 
\mathbb{TOP}\right) }\left( Y_{j},P\right) \longrightarrow Hom_{H\left( 
\mathbb{TOP}\right) }\left( X,P\right) =\left[ X,P\right]
\end{equation*}%
is bijective where $\left[ Z,P\right] $ is the set of homotopy classes of
continuous mappings from $Z$ to $P$.

An expansion is called \textbf{polyhedral} (or an $H\left( \mathbb{POL}%
\right) $-expansion) if all $Y_{j}$ are polyhedra. It is called an $\mathbb{%
ANR}$-expansion if all $Y_{j}$ are $ANR$s.
\end{definition}

Pro-homotopy is defined in \cite{Mardesic-Segal-MR676973}, p. 121:

\begin{definition}
\label{Pro-homotopy-groups}For a (pointed) topological space $X$, define its
pro-homotopy pro-sets%
\begin{equation*}
pro\text{-}\pi _{n}\left( X\right) 
%TCIMACRO{\TeXButton{assigned}{{:=}}}%
%BeginExpansion
{:=}%
%EndExpansion
\left( \pi _{n}\left( Y_{j}\right) \right)
\end{equation*}%
where $X\rightarrow \left( Y_{j}\right) $ is an $H\left( \mathbb{POL}\right) 
$- or an $\mathbb{ANR}$-expansion.
\end{definition}

\begin{remark}
As in the \textquotedblleft usual\textquotedblright\ algebraic topology, $%
pro $-$\pi _{0}$ is a pro-set, $pro$-$\pi _{1}$ is a pro-group, and $pro$-$%
\pi _{n}$ are abelian pro-groups for $n\geq 2$.
\end{remark}

Pro-homology groups are defined in \cite{Mardesic-Segal-MR676973}, \S %
II.3.2, as follows:

\begin{definition}
\label{Pro-homology-groups}For a topological space $X$, and an abelian group 
$A$, define its pro-homology groups as%
\begin{equation*}
pro\text{-}H_{n}\left( X,A\right) 
%TCIMACRO{\TeXButton{assigned}{{:=}}}%
%BeginExpansion
{:=}%
%EndExpansion
\left( H_{n}\left( Y_{j},A\right) \right)
\end{equation*}%
where $X\rightarrow \left( Y_{j}\right) $ is an $H\left( \mathbb{POL}\right) 
$- or an $\mathbb{ANR}$-expansion.
\end{definition}

We will use below the notion of \v{C}ech cohomology $\check{H}^{\ast }$
defined in \cite{Mardesic-Segal-MR676973}, \S II.3.2. as follows:

\begin{definition}
\label{Cech-cohomology}Let $X$ be a topological space, and $A$ be an abelian
group. Then%
\begin{equation*}
\check{H}^{n}\left( X,A\right) 
%TCIMACRO{\TeXButton{assigned}{{:=}}}%
%BeginExpansion
{:=}%
%EndExpansion
\underset{j}{colim}~H^{n}\left( Y_{j},A\right)
\end{equation*}%
where $X\longrightarrow \left( Y_{j}\right) $ is an $H\left( \mathbb{POL}%
\right) $- or an $\mathbb{ANR}$-expansion.
\end{definition}

\begin{remark}
\label{Shape-H0-vs-Cech-H0}The above definition is sufficient for our
purposes. It should be mentioned, however, how this definition is related to
the \textquotedblleft usual\textquotedblright\ one, like in \cite%
{Bredon-Book-MR1481706}, \S I.7 and \S III.4.

Theorem 8 from \cite{Mardesic-Segal-MR676973}, App.1, \S 3.2, shows that an $%
H\left( \mathbb{POL}\right) $-expansion for $X$ can be constructed using
nerves of normal open coverings of $X$. It follows that our definition is
very similar to the usual one, except for the coverings used. In the usual
one all the open coverings are considered. Therefore, our definition is a
variant of the usual one for the site $NORM\left( X\right) $ (Example \ref%
{Site-NORM}) instead of the standard site $OPEN\left( X\right) $ (Example %
\ref{Site-TOP}). It can be proved that the two definitions coincide if $X$
is a paracompact Hausdorff space. However, on the level of $\check{H}^{0}$
the two definitions coincide for an \textbf{arbitrary} space $X$. This fact
is not proved (is not needed either) in this paper. However, a glimpse of
the proof can be found in Proposition \ref{Dual-pro-H0}(2) below.
\end{remark}

\begin{proposition}
\label{Dual-pro-p0}\label{Dual-pro-H0}~

\begin{enumerate}
\item For any set $Z$ and any topological space $U$, the set%
\begin{equation*}
Hom_{\mathbb{SET}}\left( S\times pro\text{-}\pi _{0}\left( U\right) ,Z\right)
\end{equation*}%
is naturally (with respect to $S$, $Z$ and $U$) isomorphic to the set $%
Z^{S\times U}$ of continuous functions $S\times U\rightarrow Z$ where $S$
and $Z$ are supplied with the discrete topology.

\item For any abelian group $Z$ and any topological space $U$, the set%
\begin{equation*}
Hom_{Pro\left( \mathbb{AB}\right) }\left( pro\text{-}H_{0}\left( U,A\right)
,Z\right)
\end{equation*}%
is naturally (with respect to $A$, $Z$ and $U$) isomorphic to the \v{C}ech
cohomology group%
\begin{equation*}
\check{H}^{0}\left( U,Hom_{\mathbb{AB}}\left( A,Z\right) \right)
\end{equation*}%
which, in turn, is isomorphic to the group $\left( Hom_{\mathbb{AB}}\left(
A,Z\right) \right) ^{U}$ of continuous functions 
\begin{equation*}
U\longrightarrow Hom_{\mathbb{AB}}\left( A,Z\right)
\end{equation*}%
where $Hom_{\mathbb{AB}}\left( A,Z\right) $ is supplied with the discrete
topology.
\end{enumerate}
\end{proposition}

\begin{proof}
The two statements are proved similarly. The proof of (2) is a bit more
complicated, and is given below. The proof of (1) is left to the reader.

Let $U\rightarrow \left( Y_{j}\right) $ be a polyhedral (or $ANR$)
expansion, and let%
\begin{equation*}
pro\text{-}H_{0}\left( Y_{j},A\right) =\left( H_{0}\left( Y_{j},A\right)
\right)
\end{equation*}%
be the corresponding abelian pro-group. Since the spaces $Y_{j}$ are locally
path-connected, 
\begin{equation*}
H_{0}\left( Y_{j},A\right) =\dbigoplus\limits_{\pi _{0}\left( Y_{j}\right) }A
\end{equation*}%
and%
\begin{equation*}
Hom_{\mathbb{TOP}}\left( Y_{j},V\right) =Hom_{\mathbb{SET}}\left( \pi
_{0}\left( Y_{j}\right) ,V\right)
\end{equation*}%
for any discrete topological space $V$. Since $Hom_{\mathbb{AB}}\left(
A,Z\right) $ is considered as a discrete topological space, one has a
sequence of isomorphisms%
\begin{eqnarray*}
&&Hom_{Pro\left( \mathbb{AB}\right) }\left( \left( H_{0}\left(
Y_{j},A\right) \right) ,Z\right) 
%TCIMACRO{\TeXButton{ISO}{\simeq}}%
%BeginExpansion
\simeq%
%EndExpansion
\underset{j}{colim}~Hom_{\mathbb{AB}}\left( \left( \dbigoplus\limits_{\pi
_{0}\left( Y_{j}\right) }A\right) ,Z\right) 
%TCIMACRO{\TeXButton{ISO}{\simeq} }%
%BeginExpansion
\simeq
%EndExpansion
\\
&&%
%TCIMACRO{\TeXButton{ISO}{\simeq}}%
%BeginExpansion
\simeq%
%EndExpansion
\underset{j}{colim}~\dprod\limits_{\pi _{0}\left( Y_{j}\right) }Hom_{\mathbb{%
AB}}\left( A,Z\right) 
%TCIMACRO{\TeXButton{ISO}{\simeq}}%
%BeginExpansion
\simeq%
%EndExpansion
\underset{j}{colim}~Hom_{\mathbb{SET}}\left( \pi _{0}\left( Y_{j}\right)
,Hom_{\mathbb{AB}}\left( A,Z\right) \right) 
%TCIMACRO{\TeXButton{ISO}{\simeq} }%
%BeginExpansion
\simeq
%EndExpansion
\\
&&%
%TCIMACRO{\TeXButton{ISO}{\simeq}}%
%BeginExpansion
\simeq%
%EndExpansion
\underset{j}{colim}~Hom_{\mathbb{TOP}}\left( Y_{j},Hom_{\mathbb{AB}}\left(
A,Z\right) \right) .
\end{eqnarray*}%
The compositions%
\begin{equation*}
U\longrightarrow Y_{j}\longrightarrow Hom_{\mathbb{AB}}\left( A,Z\right)
\end{equation*}%
define a natural mapping%
\begin{equation*}
\underset{j}{colim}~\left[ Y_{j},Hom_{\mathbb{AB}}\left( A,Z\right) \right]
\longrightarrow \left[ U,Hom_{\mathbb{AB}}\left( A,Z\right) \right] .
\end{equation*}%
That mapping is an isomorphism because $U\rightarrow \left( Y_{j}\right) $
is an expansion. Since $Hom_{\mathbb{AB}}\left( A,Z\right) $ is discrete,
the homotopy classes of mappings%
\begin{equation*}
U\longrightarrow Hom_{\mathbb{AB}}\left( A,Z\right)
\end{equation*}%
and%
\begin{equation*}
Y_{j}\longrightarrow Hom_{\mathbb{AB}}\left( A,Z\right)
\end{equation*}%
consist of single mappings, therefore both%
\begin{eqnarray*}
\underset{j}{colim}~\left[ Y_{j},Hom_{\mathbb{AB}}\left( A,Z\right) \right]
&=&\underset{j}{colim}~Hom_{\mathbb{TOP}}\left( Y_{j},Hom_{\mathbb{AB}%
}\left( A,Z\right) \right) \longrightarrow \\
&\longrightarrow &Hom_{\mathbb{TOP}}\left( U,Hom_{\mathbb{AB}}\left(
A,Z\right) \right) =\left[ U,Hom_{\mathbb{AB}}\left( A,Z\right) \right] ,
\end{eqnarray*}%
and the internal mapping%
\begin{equation*}
Hom_{Pro\left( \mathbb{AB}\right) }\left( \left( H_{0}\left( Y_{j},A\right)
\right) ,Z\right) 
%TCIMACRO{\TeXButton{ISO}{\simeq}}%
%BeginExpansion
\simeq%
%EndExpansion
\underset{j}{colim}~Hom_{\mathbb{TOP}}\left( Y_{j},Hom_{\mathbb{AB}}\left(
A,Z\right) \right) \longrightarrow Hom_{\mathbb{TOP}}\left( U,Hom_{\mathbb{AB%
}}\left( A,Z\right) \right)
\end{equation*}%
are isomorphisms. The \v{C}ech cohomology group%
\begin{equation*}
\check{H}^{0}\left( U,Hom_{\mathbb{AB}}\left( A,Z\right) \right)
\end{equation*}%
is isomorphic to 
\begin{eqnarray*}
&&Hom_{Pro\left( \mathbb{AB}\right) }\left( \left( H_{0}\left( Y_{j}\right)
\right) ,Hom_{\mathbb{AB}}\left( A,Z\right) \right) 
%TCIMACRO{\TeXButton{ISO}{\simeq}}%
%BeginExpansion
\simeq%
%EndExpansion
\underset{j}{colim}~Hom_{\mathbb{AB}}\left( H_{0}\left( Y_{j}\right) ,Hom_{%
\mathbb{AB}}\left( A,Z\right) \right) 
%TCIMACRO{\TeXButton{ISO}{\simeq} }%
%BeginExpansion
\simeq
%EndExpansion
\\
&&%
%TCIMACRO{\TeXButton{ISO}{\simeq}}%
%BeginExpansion
\simeq%
%EndExpansion
\underset{j}{colim}~Hom_{\mathbb{AB}}\left( H_{0}\left( Y_{j}\right) \otimes
A,Z\right) 
%TCIMACRO{\TeXButton{ISO}{\simeq}}%
%BeginExpansion
\simeq%
%EndExpansion
\underset{j}{colim}~Hom_{\mathbb{AB}}\left( \left( \dbigoplus\limits_{\pi
_{0}\left( Y_{j}\right) }A\right) ,Z\right) 
%TCIMACRO{\TeXButton{ISO}{\simeq} }%
%BeginExpansion
\simeq
%EndExpansion
\\
&&%
%TCIMACRO{\TeXButton{ISO}{\simeq}}%
%BeginExpansion
\simeq%
%EndExpansion
Hom_{Pro\left( \mathbb{AB}\right) }\left( \left( H_{0}\left( Y_{j},A\right)
\right) ,Z\right) .
\end{eqnarray*}
\end{proof}

\subsection{Proof of Theorem \protect\ref{Main-constant}}

\begin{proof}
\textbf{(1)} Let $Z$ be a set. Then, due to Proposition \ref{Dual-pro-p0}%
(1), the presheaf%
\begin{equation*}
\kappa \left( \mathcal{P}\right) \left( Z\right) =\kappa \left( S\times pro%
\text{-}\pi _{0}\right) \left( Z\right)
\end{equation*}%
of sets is isomorphic to the presheaf $\mathcal{B}$, $\mathcal{B}\left(
U\right) :=Z^{S\times U}$. For any open covering $\left\{ U_{i}\rightarrow
U\right\} $ the space $S\times U$ ($S$ with the discrete topology) is
isomorphic in the category $\mathbb{TOP}$ to the cokernel%
\begin{equation*}
coker\left( \dcoprod\limits_{i,j}\left( S\times \left( U_{i}\cap
U_{j}\right) \right) \rightrightarrows \dcoprod\limits_{i,j}\left( S\times
U_{i}\right) \right) ,
\end{equation*}%
therefore%
\begin{eqnarray*}
Z^{S\times U} &=&Hom_{\mathbb{TOP}}\left( S\times U,Z\right) 
%TCIMACRO{\TeXButton{ISO}{\simeq} }%
%BeginExpansion
\simeq
%EndExpansion
\\
&&%
%TCIMACRO{\TeXButton{ISO}{\simeq}}%
%BeginExpansion
\simeq%
%EndExpansion
\ker \left( Hom_{\mathbb{TOP}}\left( \dcoprod\limits_{i,j}\left( S\times
U_{i}\right) ,Z\right) \rightrightarrows Hom_{\mathbb{TOP}}\left(
\dcoprod\limits_{i,j}\left( S\times \left( U_{i}\cap U_{j}\right) \right)
,Z\right) \right) 
%TCIMACRO{\TeXButton{ISO}{\simeq} }%
%BeginExpansion
\simeq
%EndExpansion
\\
&&%
%TCIMACRO{\TeXButton{ISO}{\simeq}}%
%BeginExpansion
\simeq%
%EndExpansion
\ker \left( \dprod\limits_{i}\mathcal{B}\left( U_{i}\right)
\rightrightarrows \dprod\limits_{i,j}\mathcal{B}\left( U_{i}\cap
U_{j}\right) \right) ,
\end{eqnarray*}%
and $\mathcal{B}$ is a sheaf of sets. Proposition \ref{Conditions-Cosheaf}
implies that $\mathcal{P}=S\times pro$-$\pi _{0}$ is a cosheaf.

\textbf{(2)} It is enough to prove that%
\begin{equation*}
\mathcal{P}=S\times pro\text{-}\pi _{0}\longrightarrow S
\end{equation*}%
is a local isomorphism. Let $Z$ be a set, and let $x\in X$. Clearly $\kappa
\left( S\right) \left( Z\right) _{x}%
%TCIMACRO{\TeXButton{ISO}{\simeq}}%
%BeginExpansion
\simeq%
%EndExpansion
Z^{S}$. Moreover,%
\begin{equation*}
\left( \kappa \left( \mathcal{P}\right) \right) _{x}=\left( \kappa \left(
S\times pro\text{-}\pi _{0}\right) \left( Z\right) \right) _{x}=\underset{%
x\in V}{colim}~Z^{S\times V}
\end{equation*}%
where the colimit is taken over all open neighborhoods $V$ of $x$. The
mappings $S\times V\rightarrow Z$ involved are locally constant since $Z$ is
discrete. Therefore, any two germs $\left[ f\right] $ and $\left[ g\right] $%
, $f,g:W\rightarrow Z$, where $W$ is an open neighborhood of $S\times \{x\}$%
, are equivalent iff $f|_{S\times \{x\}}=g|_{S\times \{x\}}$. It follows
that 
\begin{equation*}
\underset{x\in V}{colim}~Z^{S\times V}%
%TCIMACRO{\TeXButton{ISO}{\simeq}}%
%BeginExpansion
\simeq%
%EndExpansion
Z^{S},
\end{equation*}%
and that both the mapping of presheaves%
\begin{equation*}
\kappa \left( S\right) \longrightarrow \kappa \left( S\times pro\text{-}\pi
_{0}\right) \left( Z\right)
\end{equation*}%
and the mapping of precosheaves%
\begin{equation*}
\mathcal{P}=S\times pro\text{-}\pi _{0}\longrightarrow S
\end{equation*}%
are local isomorphisms. Due to Theorem \ref{Main-local-iso-pro-SET}, 
\begin{equation*}
\mathcal{P}=S\times pro\text{-}\pi _{0}%
%TCIMACRO{\TeXButton{ISO}{\simeq}}%
%BeginExpansion
\simeq%
%EndExpansion
\left( S\right) _{\#}.
\end{equation*}

\textbf{(3)} Let $Z$ be an abelian group. Proposition \ref{Dual-pro-H0}(2)
implies that the presheaf $\kappa \left( pro\text{-}H_{0}\left( \_,A\right)
\right) \left( Z\right) $ of abelian groups is isomorphic to the presheaf $%
\mathcal{C}$, $\mathcal{C}\left( U\right) :=\left( Hom_{\mathbb{AB}}\left(
A,Z\right) \right) ^{U}$. For any open covering $\left\{ U_{i}\rightarrow
U\right\} $ the space $U$ is isomorphic in the category $\mathbb{TOP}$ to
the cokernel%
\begin{equation*}
coker\left( \dcoprod\limits_{i,j}\left( U_{i}\cap U_{j}\right)
\rightrightarrows \dcoprod\limits_{i,j}U_{i}\right) ,
\end{equation*}%
therefore, reasoning as in (1), on gets that $\mathcal{C}$ is a sheaf of
abelian groups. It follows that $\mathcal{H}=pro$-$H_{0}\left( \_,A\right) $
is a cosheaf.

\textbf{(4)} It is enough to prove that%
\begin{equation*}
\mathcal{H}=pro\text{-}H_{0}\left( \_,A\right) \longrightarrow A
\end{equation*}%
is a local isomorphism. Let $Z$ be an abelian group, and let $x\in X$.
Reasoning similarly to (2), one gets isomorphisms%
\begin{eqnarray*}
&&\kappa \left( A\right) \left( Z\right) _{x}%
%TCIMACRO{\TeXButton{ISO}{\simeq}}%
%BeginExpansion
\simeq%
%EndExpansion
Hom_{\mathbb{AB}}\left( A,Z\right) 
%TCIMACRO{\TeXButton{ISO}{\simeq}}%
%BeginExpansion
\simeq%
%EndExpansion
\underset{x\in V}{colim}~\left( Hom_{\mathbb{AB}}\left( A,Z\right) \right)
^{V}%
%TCIMACRO{\TeXButton{ISO}{\simeq} }%
%BeginExpansion
\simeq
%EndExpansion
\\
&&%
%TCIMACRO{\TeXButton{ISO}{\simeq}}%
%BeginExpansion
\simeq%
%EndExpansion
\left( \kappa \left( pro\text{-}H_{0}\left( \_,A\right) \right) \left(
Z\right) \right) _{x}%
%TCIMACRO{\TeXButton{ISO}{\simeq}}%
%BeginExpansion
\simeq%
%EndExpansion
\left( \kappa \left( \mathcal{H}\right) \right) _{x}\left( Z\right) .
\end{eqnarray*}%
Therefore $\kappa \left( A\right) _{x}%
%TCIMACRO{\TeXButton{ISO}{\simeq}}%
%BeginExpansion
\simeq%
%EndExpansion
\left( \kappa \left( pro\text{-}H_{0}\left( \_,A\right) \right) \right) _{x}$
and $\left( pro\text{-}H_{0}\left( \_,A\right) \right) ^{x}%
%TCIMACRO{\TeXButton{ISO}{\simeq}}%
%BeginExpansion
\simeq%
%EndExpansion
\left( A\right) ^{x}$, as desired.
\end{proof}

\section{\label{Appendix-Pro-category}Appendix}

\subsection{Categories $Pro\left( \mathbb{SET}\right) $ and $Pro\left( 
\mathbb{AB}\right) $}

Let us remind necessary notions from category theory. We fix a \textbf{%
universe} $\mathfrak{U}$ (\cite{Kashiwara-MR2182076}, Definition 1.1.1).

\begin{definition}
\label{Small-set}A set is called \textbf{small} ($\mathfrak{U}$-small in the
terminology of \cite{Kashiwara-MR2182076}, Definition 1.1.2) if it is
isomorphic to a set belonging to $\mathfrak{U}$. A category $\mathbb{D}$ is
called \textbf{small} if both the set of objects $Ob\left( \mathbb{D}\right) 
$ and the set of morphisms $Mor\left( \mathbb{D}\right) $ are small.
\end{definition}

\begin{definition}
\label{U-category}A category $\mathbb{D}$ is called a $\mathfrak{U}$%
-category (\cite{Kashiwara-MR2182076}, Definition 1.2.1) if 
\begin{equation*}
Hom_{\mathbb{D}}\left( X,Y\right)
\end{equation*}%
is small for any two objects $X$ and $Y$.
\end{definition}

\begin{definition}
\label{Small limits}A \textbf{small limit} (\textbf{small colimit}) in a
category $\mathbb{D}$ is a limit (colimit) of a diagram%
\begin{equation*}
X:I\longrightarrow \mathbb{D}
\end{equation*}%
where $I$ is a small category.
\end{definition}

\begin{definition}
\label{Filtrant-colimit}A \textbf{filtrant (cofiltrant) colimit} (\textbf{%
limit}) in a category $\mathbb{D}$ is a colimit (limit) of a diagram%
\begin{equation*}
X:I\longrightarrow \mathbb{D}
\end{equation*}%
where $I$ is a small filtrant (cofiltrant) category (\cite%
{Kashiwara-MR2182076}, Definition 3.1.1).
\end{definition}

\begin{remark}
Whenever possible, we use simplified notations for diagrams: $\left(
X_{i}\right) _{i\in I}$, their limits:%
\begin{equation*}
\lim_{i}X_{i}=\left( Y\longrightarrow X_{i}\right) _{i\in I},
\end{equation*}%
and colimits 
\begin{equation*}
\underset{i}{colim}X_{i}=\left( X_{i}\longrightarrow Y\right) _{i\in I}.
\end{equation*}
\end{remark}

\begin{remark}
\label{Empty-diagram}For an empty diagram $X:I\rightarrow \mathbb{D}$, its
colimit is an initial object in $\mathbb{D}$, while its limit a terminal
object. In particular, a coproduct of an empty family of objects is an
initial object, while a product of such a family is terminal.
\end{remark}

\begin{definition}
\label{complete}A category $\mathbb{D}$ is called \textbf{complete} (\textbf{%
cocomplete}) if $\mathbb{D}$ admits small limits (colimits).
\end{definition}

\begin{definition}
\label{SET,AB}We denote by $\mathbb{SET}$ the category of small sets, and by 
$\mathbb{AB}$ the additive category of small abelian groups. These two
categories are clearly $\mathfrak{U}$-categories.
\end{definition}

\begin{definition}
\label{setset,abab}For a category $\mathbb{D}$, let $\mathbb{SET}^{\mathbb{D}%
}$ be the category of functors $\mathbb{D\rightarrow SET}$. For an \textbf{%
additive} category $\mathbb{D}$, let $\mathbb{AB}^{\mathbb{D}}$ be the
category of \textbf{additive} functors $\mathbb{D\rightarrow AB}$. These two
categories are \textbf{not} in general $\mathfrak{U}$-categories (unless $%
\mathbb{D}$ is a small category).
\end{definition}

For a category $\mathbb{D}$, let $\iota :\mathbb{D}^{op}\rightarrow \mathbb{%
SET}^{\mathbb{D}}$ be the Ioneda full embedding:%
\begin{equation*}
\iota \left( X\right) :=Hom_{\mathbb{D}}\left( X,\_\right) :\mathbb{D}%
\longrightarrow \mathbb{SET},
\end{equation*}%
and let $\kappa :\mathbb{D}\longrightarrow \mathbb{SET}^{\mathbb{D}}$ be the
corresponding contravariant embedding.

\begin{definition}
\label{Def-proset}(\cite{Kashiwara-MR2182076}, Definition 6.1.1) The
category $Pro\left( \mathbb{D}\right) $ is the opposite category $\left( 
\mathbb{E}\right) ^{op}$ where $\mathbb{E}\subseteq \mathbb{SET}^{\mathbb{D}%
} $ is the full subcategory of functors that are filtrant colimits of
representable functors, i.e. colimits of diagrams of the form%
\begin{equation*}
I^{op}\overset{X^{op}}{\longrightarrow }\mathbb{D}^{op}\overset{\iota }{%
\longrightarrow }\mathbb{SET}^{\mathbb{D}}
\end{equation*}%
where $I$ is a small cofiltrant category ($I^{op}$ is filtrant), and $%
X:I\rightarrow \mathbb{D}$ is a functor.

Let two pro-objects be defined by the diagrams $\left( X_{i}\right) _{i\in
I} $ and $\left( Y_{j}\right) _{j\in J}$. Then%
\begin{equation*}
Hom_{Pro\left( \mathbb{D}\right) }\left( \left( X_{i}\right) _{i\in
I},\left( Y_{j}\right) _{j\in J}\right) =\lim_{j}~\underset{i}{colim}~Hom_{%
\mathbb{D}}\left( X_{i},Y_{j}\right) .
\end{equation*}
\end{definition}

\begin{remark}
\label{Trivial-pro-object}The category $\mathbb{D}$ is fully embedded in $%
Pro\left( \mathbb{D}\right) $: an object $X\in \mathbb{D}$ can be
represented by a trivial diagram $\left( X_{i}=X\right) _{i\in I}$ where $I$
is a category with one object and one morphism. Such pro-objects are called 
\emph{trivial} (\emph{rudimentary} in \cite{Mardesic-Segal-MR676973})
\end{remark}

\begin{definition}
\label{i-SET}The full embedding 
\begin{equation*}
\iota :\left( Pro\left( \mathbb{D}\right) \right) ^{op}\longrightarrow 
\mathbb{SET}^{\mathbb{D}}
\end{equation*}%
will be also called the Ioneda embedding, and will be denoted by the same
symbol $\iota $.
\end{definition}

Let $\mathbb{D}$ be an additive category. Since any set $\iota \left(
X\right) \left( Y\right) =Hom_{\mathbb{D}}\left( X,Y\right) $ has the
structure of an abelian group, and the bifunctors $Hom_{\mathbb{D}}\left(
\_,\_\right) $ are bi-additive, we obtain the \textbf{additive} Ioneda
embedding, denoted by the same letter%
\begin{equation*}
\iota :\left( Pro\left( \mathbb{D}\right) \right) ^{op}\longrightarrow 
\mathbb{AB}^{\mathbb{D}}.
\end{equation*}

\begin{remark}
\label{Belongs-to-pro-D}An object $Y\in \mathbb{SET}^{\mathbb{D}}$ ($\mathbb{%
AB}^{\mathbb{D}}$) belongs to $\iota \left( \left( Pro\left( \mathbb{D}%
\right) \right) ^{op}\right) $ iff it is right exact and its fiber category $%
C^{Y}$ (\cite{Kashiwara-MR2182076}, Definition 1.2.16) is finally small. See 
\cite{Kashiwara-MR2182076}, Propositions 6.1.5 and 8.6.2.
\end{remark}

\begin{remark}
\label{Belongs-to-D}Let $\mathbb{D}$ be either $\mathbb{SET}$ or $\mathbb{AB}
$. The restrictions of $\iota $ on $\mathbb{D}^{op}$ are the classical
Ioneda (general and additive) embedding%
\begin{equation*}
\iota :\mathbb{D}^{op}\longrightarrow \mathbb{D}^{\mathbb{D}}.
\end{equation*}%
An object $Y\in \mathbb{D}^{\mathbb{D}}$ belongs to $\iota \left( \mathbb{D}%
^{op}\right) $ iff $C^{Y}$ has an initial object (\cite{Kashiwara-MR2182076}%
, Proposition 1.4.10).
\end{remark}

\begin{definition}
\label{k-SET}Let us denote by $\kappa $ the corresponding contravariant
embedding%
\begin{equation*}
\kappa :Pro\left( \mathbb{D}\right) \longrightarrow \mathbb{SET}^{\mathbb{D}%
}\left( \mathbb{AB}^{\mathbb{D}}\right) .
\end{equation*}
\end{definition}

Proposition \ref{k-SET-Remark} below is valid for general categories of the
form $\mathbb{SET}^{\mathbb{D}}$ or $\mathbb{AB}^{\mathbb{D}}$. We will use
it only in the cases $\mathbb{D=SET}$ or $\mathbb{D=AB}$:

\begin{proposition}
\label{k-SET-Remark}Let $\mathbb{D}$ be either $\mathbb{SET}$ or $\mathbb{AB}
$.

\begin{enumerate}
\item \label{DD-vs-D-colimits}Morphisms $\left( X_{i}\rightarrow Y\right)
_{i\in I}$, where $I$ is a small category, form a colimit in $\mathbb{D}^{%
\mathbb{D}}$ iff $\left( X_{i}\left( Z\right) \rightarrow Y\left( Z\right)
\right) _{i\in I}$ form a colimit in $\mathbb{D}$ for any $Z\in \mathbb{D}$.

\item \label{DD-vs-D-limits}Morphisms $\left( Y\rightarrow X_{i}\right)
_{i\in I}$, where $I$ is a small category, form a limit in $\mathbb{D}^{%
\mathbb{D}}$ iff $\left( Y\left( Z\right) \rightarrow X_{i}\left( Z\right)
\right) _{i\in I}$ form a limit in $\mathbb{D}$ for any $Z\in \mathbb{D}$.

\item \label{DD-vs-D-mono-epi}A morphism $f:X\rightarrow Y$ in $\mathbb{D}^{%
\mathbb{D}}$ is a monomorphism (epimorphism) iff $f\left( Z\right) :X\left(
Z\right) \rightarrow Y\left( Z\right) $ is a monomorphism (epimorphism) in $%
\mathbb{D}$ for any $Z\in \mathbb{D}$.

\item \label{DD-vs-Pro(D)-colimits}The \textbf{contravariant} embedding%
\begin{equation*}
\kappa :Pro\left( \mathbb{D}\right) \longrightarrow \mathbb{D}^{\mathbb{D}}
\end{equation*}%
converts small colimits in $Pro\left( \mathbb{D}\right) $ to limits in $%
\mathbb{D}^{\mathbb{D}}$. Moreover, morphisms $\left( X_{i}\rightarrow
Y\right) _{i\in I}$, where $I$ is a small category, form a colimit in $%
Pro\left( \mathbb{D}\right) $ iff $\left( \kappa \left( Y\right) \rightarrow
\kappa \left( X_{i}\right) \right) _{i\in I}$ form a limit in $\mathbb{D}^{%
\mathbb{D}}$.

\item \label{DD-vs-Pro(D)-limits}The embedding $\kappa $ converts cofiltrant
limits in $Pro\left( \mathbb{D}\right) $ to filtrant colimits in $\mathbb{D}%
^{\mathbb{D}}$. Moreover, given a small cofiltrant diagram $\left(
X_{i}\right) _{i\in I}$, then the morphisms $\left( Y\rightarrow
X_{i}\right) _{i\in I}$ form a limit in $Pro\left( \mathbb{D}\right) $ iff $%
\left( \kappa \left( X_{i}\right) \rightarrow \kappa \left( X\right) \right)
_{i\in I}$ form a colimit in $\mathbb{D}^{\mathbb{D}}$.

\item \label{DD-vs-Pro(D)-epi}A morphism $f:X\rightarrow Y$ in $Pro\left( 
\mathbb{D}\right) $ is an epimorphism iff $\kappa \left( f\right) $ is a
monomorphism in $\mathbb{D}^{\mathbb{D}}$.

\item \label{Cofiltrant-limits-exact}Cofiltrant limits are exact in $%
Pro\left( \mathbb{D}\right) $.

\item \label{Pro(D)-vs-D-limits-colimits}The full embedding $\mathbb{D}%
\hookrightarrow Pro\left( \mathbb{D}\right) $ commutes with \textbf{small}
colimits (e.g., cokernels and \textbf{small} coproducts), and with \textbf{%
finite} limits (e.g., kernels and \textbf{finite} products),

\item \label{Pro(D)-vs-D-mono-epi}The full embedding $\mathbb{D}%
\hookrightarrow Pro\left( \mathbb{D}\right) $ preserves epimorphisms and
monomorphisms, i.e. $f:X\rightarrow Y$ is an epimorphism (monomorphism) in $%
\mathbb{D}$ iff $f$ is an epimorphism (monomorphism) in $Pro\left( \mathbb{D}%
\right) $.
\end{enumerate}
\end{proposition}

\begin{proof}
These statements are more or less well-known. For the proof, see \cite%
{Kashiwara-MR2182076}, Part 6 and Chapter 8.6.
\end{proof}

\subsection{Grothendieck topologies (sites)}

\begin{definition}
\label{Grothendieck-site}A \textbf{Grothendieck topology}, or a \textbf{%
Grothendieck site} (or simply a \textbf{site}) is a pair%
\begin{equation*}
X=\left( Cat\left( X\right) ,Cov\left( X\right) \right)
\end{equation*}%
where $Cat\left( X\right) $ is a small category (Definition \ref{Small-set}%
), and $Cov\left( X\right) $ is a collection of families of morphisms%
\begin{equation*}
\left\{ U_{i}\longrightarrow U\right\} \in Cat\left( X\right)
\end{equation*}%
satisfying (1)-(3) from \cite{Artin-GT}, Definition 1.1.1, COV1-COV4 from 
\cite{Kashiwara-MR2182076}, p. 391, or T1-T3 from \cite{Tamme-MR1317816},
Definition I.1.2.1.
\end{definition}

\begin{example}
\label{Site-TOP}Let $X$ be a topological space. We will call the site $%
OPEN\left( X\right) $ below the \textbf{standard site} for $X$:%
\begin{equation*}
OPEN\left( X\right) =\left( Cat\left( OPEN\left( X\right) \right) ,Cov\left(
OPEN\left( X\right) \right) \right) .
\end{equation*}%
$Cat\left( OPEN\left( X\right) \right) $ will consist of open subsets of $X$
as objects and inclusions $U\subseteq V$ as morphisms. The set of coverings $%
Cov\left( OPEN\left( X\right) \right) $ consists of families%
\begin{equation*}
\left\{ U_{i}\longrightarrow U\right\} \in Cat\left( OPEN\left( X\right)
\right)
\end{equation*}%
with%
\begin{equation*}
\dbigcup\limits_{i}U_{i}=U.
\end{equation*}
\end{example}

\begin{remark}
\label{Denote-standard-site-simply}We will often denote the standard site
simply by $X=\left( Cat\left( X\right) ,Cov\left( X\right) \right) $.
\end{remark}

\begin{example}
\label{Site-NORM}Let again $X$ be a topological space. Consider the site%
\begin{equation*}
NORM\left( X\right) =\left( Cat\left( NORM\left( X\right) \right) ,Cov\left(
NORM\left( X\right) \right) \right)
\end{equation*}%
where%
\begin{equation*}
Cat\left( NORM\left( X\right) \right) =Cat\left( X\right)
\end{equation*}%
while $Cov\left( NORM\left( X\right) \right) $ consists of \textbf{normal}
coverings%
\begin{equation*}
\left\{ U_{i}\longrightarrow U\right\} \in Cat\left( X\right) .
\end{equation*}%
A normal covering (\cite{Mardesic-Segal-MR676973}, I, \S 6.2) is a covering $%
\left\{ U_{i}\right\} $ which admits a partition of unity subordinated to $%
\left\{ U_{i}\right\} $.
\end{example}

\begin{example}
\label{Site-FINITE}Let again $X$ be a topological space. Consider the site%
\begin{equation*}
FINITE\left( X\right) =\left( Cat\left( FINITE\left( X\right) \right)
,Cov\left( FINITE\left( X\right) \right) \right)
\end{equation*}%
where%
\begin{equation*}
Cat\left( FINITE\left( X\right) \right) =Cat\left( X\right)
\end{equation*}%
while $Cov\left( FINITE\left( X\right) \right) $ consists of \textbf{finite}
normal coverings%
\begin{equation*}
\left\{ U_{i}\longrightarrow U\right\} \in Cat\left( X\right) .
\end{equation*}
\end{example}

\begin{example}
\label{Equivariant}Let $G$ be a topological group, and $X$ be a $G$-space.
The corresponding site $OPEN_{G}\left( X\right) $ has $G$-invariant open
subsets of $X$ as objects of $Cat\left( OPEN_{G}\left( X\right) \right) $
and $G$-invariant open coverings as elements of $Cov\left( OPEN_{G}\left(
X\right) \right) $ (compare to \cite{Artin-GT}, Example 1.1.4, or \cite%
{Tamme-MR1317816}, Example (1.3.2)).
\end{example}

\begin{example}
\label{Site-ETALE}Let $X$ be a noetherian scheme, and define the site $%
X^{et} $ by: $Cat\left( X^{et}\right) $ is the category of schemes $Y/X$ 
\'{e}tale, finite type, while $Cov\left( X^{et}\right) $ consists of finite
surjective families of maps. See \cite{Artin-GT}, Example 1.1.6, or \cite%
{Tamme-MR1317816}, II.1.2.
\end{example}

Let $\mathbb{D}$ be a category.

\begin{definition}
\label{Presheaf in D}Let $X=\left( Cat\left( X\right) ,Cov\left( X\right)
\right) $ be a site. A \textbf{presheaf} on $X$ with values in $\mathbb{D}$
is a \textbf{contravariant} functor $\mathcal{A}$ from $Cat\left( X\right) $
to $\mathbb{D}$. \textbf{Morphisms} between presheaves are morphisms between
the corresponding functors.
\end{definition}

Assume $\mathbb{D}$ admits small products.

\begin{definition}
\label{Separated-presheaf}A presheaf $\mathcal{A}$ on a site $X=\left(
Cat\left( X\right) ,Cov\left( X\right) \right) $ with values in $\mathbb{D}$
is called \textbf{separated} (\textbf{monopresheaf} in the terminology of 
\cite{Bredon-Book-MR1481706}) iff%
\begin{equation*}
\mathcal{A}\left( U\right) \longrightarrow \dprod\limits_{i}\mathcal{A}%
\left( U_{i}\right)
\end{equation*}%
is a monomorphism for any covering $\left\{ U_{i}\longrightarrow U\right\}
\in Cov\left( X\right) $.
\end{definition}

\begin{remark}
The condition in the Definition \ref{Separated-presheaf} is equivalent to
the condition (S1) in \cite{Bredon-Book-MR1481706}, I.1.7.
\end{remark}

Assume $\mathbb{D}$ is \textbf{complete} (Definition \ref{complete}).

\begin{definition}
\label{Sheaf in D}A \textbf{sheaf} on a site $X=\left( Cat\left( X\right)
,Cov\left( X\right) \right) $ with values in $\mathbb{D}$ is a presheaf $%
\mathcal{A}$ such that%
\begin{equation*}
\mathcal{A}\left( U\right) 
%TCIMACRO{\TeXButton{ISO}{\simeq}}%
%BeginExpansion
\simeq%
%EndExpansion
\ker \left( \dprod\limits_{i}\mathcal{A}\left( U_{i}\right)
\rightrightarrows \dprod\limits_{i,j}\mathcal{A}\left( U_{i}\times
_{U}U_{j}\right) \right)
\end{equation*}%
for any covering $\left\{ U_{i}\longrightarrow U\right\} \in Cov\left(
X\right) $.
\end{definition}

\begin{remark}
The condition in the Definition \ref{Separated-presheaf} is equivalent to
the conditions (S1) and (S2) in \cite{Bredon-Book-MR1481706}, I.1.7. Those
conditions, in turn, are equivalent to the \textquotedblleft
classical\textquotedblright\ definition of a sheaf as a local homeomorphism $%
\mathcal{A}\rightarrow X$. We do not use the classical definition in this
paper. The reasons are:

\begin{enumerate}
\item The classical definition cannot be applied to sheaves with values in
more general categories like $\mathbb{SET}^{\mathbb{SET}}$ and $\mathbb{AB}^{%
\mathbb{AB}}$.

\item It cannot be translated into the language of cosheaves.

\item Many of our results are valid not only for the site $OPEN\left(
X\right) $ where $X$ is a topological space, but also for more general sites
like $NORM\left( X\right) $, $FINITE\left( X\right) $, the equivariant site
(see Examples \ref{Site-TOP}, \ref{Site-NORM}, \ref{Site-FINITE} and \ref%
{Equivariant}), and even for \textquotedblleft
non-topological\textquotedblright\ sites like the site $X^{et}$ (Example \ref%
{Site-ETALE}).
\end{enumerate}
\end{remark}

\begin{definition}
\label{Categories-of-(pre)sheaves}Let us denote:

\textbf{a)} by $\mathbb{PS}\left( X,\mathbb{D}\right) $ the category of
presheaves on $X$ with values in $\mathbb{D}$;

\textbf{b)} ($\mathbb{D}$ is complete) by $\mathbb{S}\left( X,\mathbb{D}%
\right) $ the full subcategory of $\mathbb{PS}\left( X,\mathbb{D}\right) $
consisting of sheaves.
\end{definition}

\subsection{\label{Sheafification}Sheafification}

Throughout Subsections \ref{Sheafification} and \ref%
{Sheaves-presheaves-on-topological-spaces}, $\mathbb{D}$ will denote $%
\mathbb{SET}$, $\mathbb{AB}$, $\mathbb{SET}^{\mathbb{SET}}$, or $\mathbb{AB}%
^{\mathbb{AB}}$.

\begin{definition}
\label{H0-sheaves}Let $\mathcal{A}$ be a presheaf with values in $\mathbb{D}$%
. For $\left\{ U_{i}\rightarrow U\right\} \in Cov\left( X\right) $, define%
\begin{equation*}
H^{0}\left( \left\{ U_{i}\longrightarrow U\right\} ,\mathcal{A}\right)
:=\ker \left( \dprod\limits_{i}\mathcal{A}\left( U_{i}\right)
\rightrightarrows \dprod\limits_{i,j}\mathcal{A}\left( U_{i}\cap
U_{j}\right) \right) .
\end{equation*}
\end{definition}

\begin{definition}
\label{Refinement}Given two coverings $\mathcal{V},\mathcal{U}\in Cov\left(
X\right) $,%
\begin{eqnarray*}
\mathcal{U} &=&\left\{ U_{i}\longrightarrow U\right\} _{i\in I}, \\
\mathcal{V} &=&\left\{ V_{j}\longrightarrow U\right\} _{j\in J},
\end{eqnarray*}%
then a \textbf{refinement mapping} $f:\mathcal{V}\longrightarrow \mathcal{U}$
is a pair%
\begin{equation*}
\left( \varepsilon :J\longrightarrow I,\left( f_{j}:V_{j}\longrightarrow
U_{\varepsilon \left( j\right) }\right) \right) _{j\in J},
\end{equation*}%
where $f_{j}$ are $U$-morphisms.
\end{definition}

\begin{lemma}
\label{Two-refinements}Given two coverings $\mathcal{V},\mathcal{U}\in
Cov\left( X\right) $, and two refinement mappings $f,g:\mathcal{V}%
\longrightarrow \mathcal{U}$, the corresponding mappings of kernels coincide:%
\begin{equation*}
H^{0}\left( f,\mathcal{A}\right) =H^{0}\left( g,\mathcal{A}\right)
:H^{0}\left( \mathcal{U},\mathcal{A}\right) \longrightarrow H^{0}\left( 
\mathcal{V},\mathcal{A}\right) .
\end{equation*}
\end{lemma}

\begin{proof}
For $\mathbb{D}$ being $\mathbb{SET}$ or $\mathbb{AB}$, see \cite%
{Tamme-MR1317816}, Lemma I.2.2.7. If $\mathbb{D}=\mathbb{SET}^{\mathbb{SET}}$
or $\mathbb{AB}^{\mathbb{AB}}$, let $Z$ be an arbitrary set (abelian group).
Apply \emph{loc. cit.} to the morphisms $f\left( Z\right) $ and $g\left(
Z\right) $, and get%
\begin{equation*}
H^{0}\left( f,\mathcal{A}\left( Z\right) \right) =H^{0}\left( g,\mathcal{A}%
\left( Z\right) \right) :H^{0}\left( \mathcal{U},\mathcal{A}\left( Z\right)
\right) \longrightarrow H^{0}\left( \mathcal{V},\mathcal{A}\left( Z\right)
\right) .
\end{equation*}
\end{proof}

\begin{remark}
In \cite{Tamme-MR1317816} all the reasonings are done for presheaves of 
\textbf{abelian groups}. However, as is underlined in (\cite{Tamme-MR1317816}%
, Remark (3.1.5)), the proofs can be easily translated to the situation of
presheaves of \textbf{sets} (see also \cite{SGA4-2-MR0354653}, exp. II, 6.4).
\end{remark}

\begin{definition}
Given $U\in Cat\left( X\right) $, the set of coverings on $U$ is a
cofiltrant pre-ordered set under the refinement relation: $\mathcal{V}\leq 
\mathcal{U}$ iff $\mathcal{V}$ refines $\mathcal{U}$. Since the mappings $%
H^{0}\left( \mathcal{U},\mathcal{A}\right) \longrightarrow H^{0}\left( 
\mathcal{V},\mathcal{A}\right) $ do not depend on the refinement mapping
(Lemma \ref{Two-refinements}), one can define%
\begin{equation*}
\mathcal{A}^{+}\left( U\right) :=\underset{\mathcal{V}}{colim}~H^{0}\left( 
\mathcal{V},\mathcal{A}\right)
\end{equation*}%
where $\mathcal{V}$ runs over coverings on $U$. $\mathcal{A}^{+}$ is clearly
a presheaf with values in $\mathbb{AB}$.
\end{definition}

\begin{proposition}
\label{Properties-Plus-construction}Let $\mathcal{A}$ be a sheaf with values
in $\mathbb{D}$. Then

\begin{enumerate}
\item $\mathcal{A}^{+}$ is separated.

\item If $\mathcal{A}$ is separated, then $\mathcal{A}^{+}$ is a sheaf.

\item The functor%
\begin{equation*}
\left( {}\right) ^{\#}:=\left( {}\right) ^{++}:\mathbb{PS}\left( X,\mathbb{D}%
\right) \longrightarrow \mathbb{S}\left( X,\mathbb{D}\right)
\end{equation*}%
is left adjoint to the inclusion functor.
\end{enumerate}
\end{proposition}

\begin{proof}
For $\mathbb{D}$ being $\mathbb{SET}$ or $\mathbb{AB}$, see \cite%
{Tamme-MR1317816}, Proposition I.3.1.3. If $\mathbb{D}=\mathbb{SET}^{\mathbb{%
SET}}$ or $\mathbb{AB}^{\mathbb{AB}}$, let $Z$ be an arbitrary set (abelian
group). Apply \emph{loc. cit.} to the presheaf of sets (abelian groups) $%
\mathcal{A}\left( Z\right) $. Varying $Z$, one gets the desired adjunction.
\end{proof}

\subsection{\label{Sheaves-presheaves-on-topological-spaces}Sheaves and
presheaves on topological spaces}

Throughout this Subsection, $X$ is a topological space considered as a site $%
OPEN\left( X\right) $ (see Example \ref{Site-TOP} and Remark \ref%
{Denote-standard-site-simply}).

\begin{definition}
\label{Stalk}Let $\mathcal{A}$ be a presheaf on $X$ with values in $\mathbb{D%
}$. Denote by%
\begin{equation*}
\mathcal{A}_{x}%
%TCIMACRO{\TeXButton{assigned}{{:=}}}%
%BeginExpansion
{:=}%
%EndExpansion
\underset{x\in U}{colim}~\mathcal{A}\left( U\right) .
\end{equation*}%
We will call $\mathcal{A}_{x}$ the \textbf{stalk} of $\mathcal{A}$ at $x$.
\end{definition}

\begin{remark}
It follows from Proposition \ref{k-SET-Remark} (\ref{DD-vs-D-limits}) that
if $\mathcal{A}$ is a presheaf with values in $\mathbb{SET}^{\mathbb{SET}}$
or $\mathbb{AB}^{\mathbb{AB}}$, and if $Z\in \mathbb{SET}$ ($Z\in \mathbb{AB}
$) then $\mathcal{A}_{x}\left( Z\right) $ is canonically isomorphic to $%
\left( \mathcal{A}\left( Z\right) \right) _{x}$.
\end{remark}

\begin{definition}
\label{local-isomorphism-presheaves}Let $\mathcal{A\rightarrow B}$ be a
morphism of presheaves on $X$. It is called a \textbf{local isomorphism} iff 
$\mathcal{A}_{x}\rightarrow \mathcal{B}_{x}$ is an isomorphism for any $x\in
X$.
\end{definition}

\begin{proposition}
\label{Sheaves-OPEN(X)-DD}~

\begin{enumerate}
\item Let $\varphi :\mathcal{A\rightarrow B}$ be a local isomorphism of
sheaves with values in $\mathbb{D}$ on $X$. Then $\varphi $ is an
isomorphism.

\item Given a presheaf $\mathcal{B}$ with values in $\mathbb{D}$ on $X$, the
natural morphisms%
\begin{eqnarray*}
\beta ^{+}\left( \mathcal{B}\right) &:&\mathcal{B}\longrightarrow \mathcal{B}%
^{+}, \\
\beta ^{\#}\left( \mathcal{B}\right) &=&\beta ^{+}\left( \mathcal{B}%
^{+}\right) \circ \beta ^{+}\left( \mathcal{B}\right) :\mathcal{B}%
\longrightarrow \mathcal{B}^{\#},
\end{eqnarray*}%
are local isomorphisms.
\end{enumerate}
\end{proposition}

\begin{proof}
For $\mathbb{D}$ being $\mathbb{SET}$ or $\mathbb{AB}$, the statements above
are more or less well-known. If $\mathbb{D}=\mathbb{SET}^{\mathbb{SET}}$ or $%
\mathbb{AB}^{\mathbb{AB}}$, consider an arbitrary set (abelian group) $Z$,
and apply the Proposition to the (pre)sheaves $\mathcal{A}\left( Z\right) $
and $\mathcal{B}\left( Z\right) $.
\end{proof}

\bibliographystyle{alpha}
\bibliography{Cosheaves}

\end{document}